\theoremstyle{plain}
\newtheorem{theorem}{Theorem}[section]
\newtheorem{proposition}[theorem]{Proposition}
\newtheorem{lemma}[theorem]{Lemma}
\newtheorem{claim}[theorem]{Claim}
\newtheorem{definition}[theorem]{Definition}
\newtheorem{remark}[theorem]{Remark}
\numberwithin{equation}{section}
\numberwithin{theorem}{section}
\newcommand{\mc}[1]{{\mathcal #1}}
\newcommand{\bb}[1]{{\mathbb #1}}
\newcommand{\id}{{1 \mskip -5mu {\rm I}}}
\renewcommand{\epsilon}{\varepsilon}
\renewcommand{\tilde}{\widetilde}
\renewcommand{\hat}{\widehat}
\renewcommand{\div}{\mathop{\rm div}\nolimits}
\definecolor{light}{gray}{.9}
\title[From level 2.5 to level 2 LD]{From level $2.5$ to level $2$ large deviations
for continuous time Markov chains}
\author[L.\ Bertini]{Lorenzo Bertini}
\address{Lorenzo Bertini \hfill\break \indent
   Dipartimento di Matematica, Universit\`a di Roma `La Sapienza'
   \hfill\break \indent
   P.le Aldo Moro 2, 00185 Roma, Italy}
 \email{bertini@mat.uniroma1.it}
\author[A.\ Faggionato]{Alessandra Faggionato}
\address{Alessandra Faggionato \hfill\break \indent
  Dipartimento di Matematica, Universit\`a di Roma `La Sapienza'
  \hfill\break \indent
  P.le Aldo Moro 2, 00185 Roma, Italy}
\email{faggiona@mat.uniroma1.it}
\author[D.\ Gabrielli]{Davide Gabrielli}
\address{Davide Gabrielli \hfill\break \indent
  Dipartimento di Matematica, Universit\`a dell'Aquila
  \hfill\break\indent
  Via Vetoio,   67100 Coppito, L'Aquila, Italy}
\email{gabriell@univaq.it}
\begin{document}

\begin{abstract}
We recover  the
Donsker--Varadhan large deviations principle (LDP)  for the empirical
measure  of  a continuous time Markov chain on a countable (finite or infinite) state
space from the  joint LDP for the empirical
measure and the empirical flow
 proved in \cite{BFG}.

\medskip

\noindent {\em Keywords}: Markov chain, Large deviations principle,
Contraction principle, Empirical flow, Empirical measure, Fenchel-Rockafellar
Theorem.

\medskip

\noindent{\em AMS 2010 Subject Classification}:
60F10,  %Large deviations
60J27;  %Continuous-time Markov processes on discrete state spaces
Secondary
82C05.  %Classical dynamic and nonequilibrium statistical mechanics (general)
\end{abstract}

\maketitle

\section{Introduction}
We consider a continuous time Markov chain $(\xi_t)_{t \geq 0}$ on a countable (finite or infinite) state space $V$. Following
\cite{N} the dynamics is defined knowing the \emph{jump rates} $r(x,y)$, $x \not
=y$ in $V$, under the assumption  that   $r(x):= \sum _{y \in V} r(x,y)<+\infty$ for all $x \in V$.  Then,   at each site $x$ the system waits an exponential time of parameter $r(x)$ afterwards it jumps to a state $y$ with probability $r(x,y)/r(x)$.
We  assume that a.s. for any fixed initial state explosion does not occur, hence the Markov chain is defined in $V$  for all times $t \in \bb R_+$  and we do not need to introduce any coffin state.    We denote  by $\bb P_x$ the law on the Skohorod space $D(\bb R_+; V)$ of the Markov chain starting at $x$.

In what follows we restrict to irreducible Markov chains such that there exists a unique invariant probability measure, which we denote by
  $\pi$.
  As in \cite{N}, by invariant probability measure $\pi$ we mean a
probability measure on $V$ such that
\begin{equation}
  \label{invariante}
  \sum _{y \in V} \pi(x) \, r(x,y)
  = \sum _{y \in V} \pi(y) \, r(y,x)\qquad \forall \:x \in V
\end{equation}
where we understand $r(x,x)=0$.
   We stress that the existence  of $\pi$ is guaranteed if $V$ is finite, while in general    uniqueness is automatic if  $\pi$ exists.

A fundamental result in the theory of large deviations is given by the Donsker--Varadhan Large Deviations Principle (LDP) of the empirical measure of Markov processes. We recall its statement referring to the above Markov chain $\xi$. Denote by $\mathcal{P}(V)$ the space of probability measures on $V$ endowed of the weak topology. Given $T>0$ the \emph{empirical measure}
$\mu_T\colon D(\bb R_+;V)\to \mc P(V)$ is defined by
\begin{equation}
  \mu_T \, (X) = \frac 1T\int_0^T\!dt \, \delta_{X_t}
\end{equation}
where $\delta_y$ denotes the pointmass at $y$.
Given $x\in V$, the ergodic theorem \cite{N} implies that the
empirical measure $\mu_T$ converges $\bb P_x$ a.s.\ to $\pi$ as $T \to
\infty$.
In particular, the family of probabilities $\{\bb P_x \circ
\mu_T^{-1}\}_{T>0}$ on $\mc P(V)$ converges to $\delta_\pi$.  In \cite{DV4} the large deviations from the above limit theorem have been studied by Donsker and Varadhan. Under suitable hypotheses (see Remark \ref{soleluna} below)
%(see in particular Section 6  in \cite{DV4}--(IV))  Donsker and Varadhan
they  proved that   as $T\to+\infty$ the family of probability measures $\{\bb
  P_x\circ\mu_T ^{-1}\}_{T>0}$ on $\mc P(V)$
  satisfies a LDP with good rate function $\mc I$ such that
    \begin{equation}
  \mathcal{I}(\mu)=   \sup_h \left \{- \langle \mu,  h^{-1} Lh   \rangle \right \}
  \end{equation}
  as $h$ varies among the strictly positive functions in the domain of the infinitesimal generator $L$ (in general, $< \mu, f>:= \sum _{x \in V} \mu(x) f(x)$).
  % for $\mu \in \mathcal{P}(V)$ and  $f: V\to \bb R$ if the sum is absolutely convergent.

The above result has been derived in \cite{DV4}--(I) from an analogous result for discrete time Markov chains by an approximation argument in the case of $V$ finite. The extension to $V$ infinite has been achieved in \cite{DV4}--(III), while in \cite{DV4}--(IV) the  LDP for the empirical measure is obtained by contraction from the LDP for the empirical process.

 \medskip

Our aim in this note is to give an alternative proof of the LDP for the empirical measure  by contraction from the joint  LDP for the  empirical measure and flow recently proved in \cite{BFG}. The result is interesting since we obtain two different representations of the rate functional for the empirical measure, one in terms of a supremum and
the other one in terms of an infimum. As can been seen for example in the proof of Proposition \ref{simmetria}
this fact is very useful. Moreover the proof of our result is interesting by herself since it exploits several
discrete geometric feature of the underlying graph. We show also that in the finite dimensional case the coincidence
of the supremum with the infimum is an instance of the Fenchel-Rockafellar duality.

\smallskip

We recall the joint LDP for the empirical measure and flu in \cite{BFG} and fix some notation.
We denote  by  $E$ the  set of ordered edges in $V$ with positive transition rate, namely
  $ E := \{(y,z)\in V \times V: y
\not = z \text{ and } r(y,z)>0\}$. Then   for each $T>0$ we  define  the \emph{empirical flow} as the map $Q_T
\colon D(\bb R_+;V)\to [0,+\infty]^E$ given by
\begin{equation}
  \label{montecarlo}
  Q_T(y,z) \, (X) :=
  \frac{1}{T} \sum_{0\leq t\leq  T} \delta_y(X_{t^-}) \delta_z(X_{t})
  \qquad (y,z)\in E\,.
\end{equation}
Namely, $ T Q_T(y,z)$ is $\bb P_x$ a.s.\ the  number of jumps from
$y$ to $z$ in the time interval $[0,T]$ of the Markov chain $\xi $ starting at $x$. As discussed in \cite{BFG}, $Q_T(y,z) $ converges to $\pi(y) r(y,z)$ at $T \to \infty$ $\bb P_x$ a.s.

\smallskip

Elements in $[0, +\infty]^E$ are called \emph{flows}.  We denote by $L^1_+(E)$ the subset  of
 \emph{summable flows}, i.e. of flows $Q$ such that $\|Q\|_1 := \sum _{(y,z) \in E} Q(y,z) <+ \infty$.
 Given a summable  flow $Q\in L^1_+ (E) $    its
\emph{divergence} $\div Q \colon V\to \bb R$ is  defined
as
\begin{equation}
  \label{divergenza_fluss}
  \div Q \, (y)= \sum _{z : \, (y,z)\in E} Q(y,z)- \sum_{z:\, (z,y)\in E} Q(z,y),
  \qquad y\in V.
\end{equation}
 Observe that the divergence maps $L^1_+(E)$ into $L^1(V)$.

To  each probability $\mu \in \mc P(V)$
we associate the flow $Q^\mu\in \bb R_+^E$ defined by
\begin{equation}
  \label{Qmu}
  Q^\mu(y,z) := \mu(y) \, r(y,z)
  \qquad (y,z)\in E.
\end{equation}
Note that  $Q^\mu\in L^1_+(E) $ if and only if $\langle\mu,
r\rangle<+\infty$. Moreover, in this case, by \eqref{invariante}
$Q^\mu$ has vanishing divergence if only if $\mu$ is invariant for
the Markov chain $\xi$, i.e. $\mu=\pi$.

We endow $L^1_+(E)$  of the bounded weak* topology. As discussed in \cite{BFG} this topology  is the most suited  for
studying large deviations of the empirical flow. For completeness we recall its definition although it will never be used below (see \cite{Me} for a detailed treatment). A subset $W \subset L^1_+(E)$ is open if and only if for each $\ell >0$ the
set $\{ Q \in W\,:\, \|Q\|_1 <  \ell\}$ is open in the ball  $\{ Q \in L^1_+(E)\,:\, \|Q\|_1 <  \ell\}$ endowed of the weak*  topology inherited from $L^1(E)$. When $E$ is finite, the bounded weak* topology coincides with the $L^1$--topology.

\medskip

We can now recall the LDP proved in \cite{BFG}. We start from the assumptions.
To this aim,
 given $f\colon V\to \bb R$ such that  $\sum_{y\in V }r(x,y) \, |f(y)| <+\infty$
for each $x\in V$,
 we
denote by $Lf \colon V\to \bb R$ the function defined by
\begin{equation}
  \label{Lf}
  L f\, (x) := \sum_{y\in V} r(x,y) \big[ f(y)-f(x)\big]
  ,\qquad x\in V.
\end{equation}
\begin{definition}
 Given $\sigma \in \bb R_+$ we say that Condition $C(\sigma)$ holds if
  there exists a sequence of functions $u_n\colon V \to (0,+\infty)$
  satisfying the following requirements:
  \begin{itemize}
  \item [(i)] For each $x\in V$ and $n\in\bb N$ it holds $\sum_{y\in V} r(x,y)
    u_n(y) <+\infty$. %In the sequel $Lu_n \colon V \to \bb R$ is the   function defined by \eqref{Lf}.
  \item [(ii)] The sequence $u_n$ is uniformly bounded from below.
    Namely, there exists $c>0$ such that $u_n(x)\ge c$ for any $x\in
    V$ and $n\in\bb N$.
  \item[(iii)] The sequence $u_n$ is uniformly bounded from above on compacts.
    Namely, for each $x\in V$ there exists a
    constant $C_x$ such that for any $n\in\bb N$ it holds $u_n(x)\le C_x$.
  \item [(iv)] Set $v_n :=  - Lu_n / u_n$. The sequence
    $v_n\colon V\to \bb R$ converges pointwise to some $v\colon V\to \bb R$.
  \item[(v)] The function $v$ has compact level sets.
    Namely, for each $\ell\in \bb R$
    the level set $\big\{x \in V \,:\, v(x)\leq \ell\big\}$ is
    finite.
  \item[(vi)]
    There exists %a strictly positive  constant $\sigma$ and
     a positive
    constant $C$ such that
    $v \ge \sigma \, r - C$.
  \end{itemize}
\end{definition}

 Let
$\Phi\colon \bb R_+ \times \bb R_+ \to [0,+\infty]$ be the function
defined by
\begin{equation}
  \label{Phi}
   \Phi (q,p)
   :=
   \begin{cases}
     \displaystyle{ q \log \frac qp - (q-p)}
     & \textrm{if $q,p\in (0,+\infty)$}
     \\
     \;p  & \textrm{if $q=0$, $p\in [0,+\infty)$}\\
     %\; 0 & \textrm{if $q=p=0$} \\
     \; +\infty & \textrm{if $p=0$ and $q\in (0,+\infty)$.}
   \end{cases}
\end{equation}
For $p>0$, $\Phi( \cdot, p)$ is a nonnegative convex function and is zero only at $q=p$. Indeed, it  is the rate function for the LDP of the sequence  $N_T/T$ as $T \to +\infty$, $(N_t)_{t \in \bb R_+}$
being a Poisson process with parameter $p$.

 Finally,
we let $I\colon \mc P(V)\times L^1_+(E) \to
[0,+\infty]$ be the functional defined by
\begin{equation}
  \label{rfq}
  I (\mu,Q) :=
  \begin{cases}
    \displaystyle{
    \sum_{(y,z)\in E} \Phi \big( Q(y,z),Q^\mu(y,z) \big)
    }& \textrm{if  } \; \div Q =0\,,\; \langle \mu,r \rangle < +\infty
    \\
    \; +\infty  & \textrm{otherwise}.
  \end{cases}
\end{equation}
\begin{remark}\label{silente} As proved in \cite{BFG}[Appendix B]    the above condition
 $\langle \mu, r \rangle < +\infty$ can be removed, since
 the series  in \eqref{rfq} diverges  if $\langle \mu , r \rangle
 =+\infty$.
\end{remark}

\begin{remark}\label{soleluna} Condition $C(0)$ (i.e. $C(\sigma)$ with $\sigma=0$) with (i) replaced by the fact that $u_n $ belongs to the domain of the infinitesimal generator, and with $Lu_n$ defined as the infinitesimal generator applied to $u_n$, is the condition under which the large deviation of the empirical measure is derived in \cite{DV4}--(IV).
\end{remark}

\begin{theorem}[Bertini, Faggionato, Gabrielli, \cite{BFG}]$\,$
  \label{LDP:misura+flusso} \\Assume Condition $C(\sigma)$ to hold with $\sigma >0$.  (Alternatively, assume the hypercontractivity Condition 2.3 in \cite{BFG}).
  Then as $T\to+\infty$ the family of probability measures $\{\bb
  P_x\circ (\mu_T,Q_T)^{-1}\}$ on $\mc P(V)\times L^1_+(E)$
  satisfies a LDP with good and convex rate function $I$.
  Namely, for   each closed set $\mc C\subset \mc P(V)\times L^1_+(E)$,
  and each open  set $\mc A \subset \mc P(V)\times L^1_+(E)$, it
  holds for each $x \in V$
  \begin{align}
    \label{ubldp}
    & \varlimsup_{T\to+\infty}\;
    \frac 1T \log \bb  P_x \Big( (\mu_T,Q_T) \in \mc C \Big)
    \le -\inf_{(\mu,Q)\in \mc C} I(\mu,Q),
    \\
    \label{lbldp}
    & \varliminf_{T\to+\infty}\;
    \frac 1T \log \bb P_x \Big( (\mu_T,Q_T) \in \mc A \Big)
    \ge -\inf_{(\mu,Q)\in \mc A} I(\mu,Q).
   \end{align}
\end{theorem}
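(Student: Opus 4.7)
The plan is to prove the joint LDP by combining a standard exponential-martingale upper bound with a change-of-measure lower bound, using Condition $C(\sigma)$ to supply the exponential tightness required in the infinite state space setting. The form of the rate function $I$ suggests that, conditionally on $\mu_T=\mu$, the variables $TQ_T(y,z)$ should behave like independent Poisson counts of parameter $TQ^\mu(y,z)$, subject only to the asymptotic conservation law $\div Q_T\to 0$ (which is the invariance of $\mu$ under the tilted dynamics).

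For the upper bound, I would exploit the exponential martingale associated with a jump rate perturbation. For any bounded $g\colon E\to\bb R$ with finite support, the Girsanov-type identity
\begin{equation*}
  \bb E_x\Bigl[\exp\Bigl(T\langle Q_T,g\rangle - T\langle \mu_T,\psi_g\rangle\Bigr)\Bigr]=1,
  \qquad \psi_g(x):=\sum_{y}r(x,y)\bigl(e^{g(x,y)}-1\bigr),
\end{equation*}
is a direct consequence of the fact that the tilted rates $r(y,z)e^{g(y,z)}$ define a Markov chain absolutely continuous with respect to the original one up to time $T$. A Chebyshev bound followed by optimization over $g$ yields \eqref{ubldp} on compact subsets with rate $\sup_g\{\langle Q,g\rangle-\langle\mu,\psi_g\rangle\}$; a pointwise Legendre computation identifies this supremum with $\sum_{(y,z)\in E}\Phi\bigl(Q(y,z),Q^\mu(y,z)\bigr)$ when $\div Q=0$, while a choice $g(y,z)=\lambda(f(z)-f(y))$ with $\lambda\to\pm\infty$ shows that the supremum is $+\infty$ whenever $\div Q$ fails to vanish.

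For the lower bound I would use the classical change-of-measure argument. Fix $(\mu,Q)$ with $I(\mu,Q)<\infty$, note that $\mu(y)>0$ on the support of $Q$, and introduce the tilted chain with rates $\tilde r(y,z):=Q(y,z)/\mu(y)$. Zero divergence of $Q$ combined with \eqref{invariante} makes $\mu$ invariant for $\tilde r$, so by the ergodic theorem $(\mu_T,Q_T)\to(\mu,Q)$ under the tilted law. Computing the Radon-Nikodym density on shrinking neighborhoods $\mc A$ of $(\mu,Q)$ produces a factor $\exp(-T\,I(\mu,Q)+o(T))$ and hence \eqref{lbldp}. Standard truncation/approximation reduces the general case to finitely supported tilts and a finite portion of $V$.

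The principal obstacle, and the reason Condition $C(\sigma)$ with $\sigma>0$ is invoked, is to upgrade this finite-dimensional LDP to a full one on $\mc P(V)\times L^1_+(E)$ with the bounded weak* topology. This demands exponential tightness in both coordinates: $\mu_T$ must concentrate in a compact subset of $\mc P(V)$, and $\|Q_T\|_1$ must be exponentially bounded. Applying the exponential martingale identity with the specific choice $g(y,z)=\log(u_n(z)/u_n(y))$ gives $\psi_g=-Lu_n/u_n=v_n$ and the identity
\begin{equation*}
  \bb E_x\Bigl[\frac{u_n(\xi_T)}{u_n(\xi_0)}\exp\bigl(T\langle \mu_T, v_n\rangle\bigr)\Bigr]=1.
\end{equation*}
Items (ii)-(iii) on $u_n$ bound the prefactor, (iv)-(v) let one pass to the limit $v$ with compact level sets (giving exponential tightness of $\mu_T$ on $\mc P(V)$), and crucially (vi) yields $v\ge\sigma r-C$, so that exponential control of $\langle\mu_T,v\rangle$ translates, via the inequality $\sigma\langle\mu_T,r\rangle\le \langle\mu_T,v\rangle+C$, into exponential control of $\langle\mu_T,r\rangle$. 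A separate martingale estimate compares $\|Q_T\|_1$ with $\langle\mu_T,r\rangle$ and promotes this to exponential tightness of $Q_T$ in the bounded weak* topology. This Lyapunov estimate is the heart of the argument and the main technical hurdle; convexity and goodness of $I$ then follow from the Legendre representation and from lower semicontinuity of $\Phi$.
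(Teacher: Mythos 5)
First, a point of comparison: the paper does not prove Theorem \ref{LDP:misura+flusso} at all --- it is imported from \cite{BFG} and used as a black box --- so there is no internal proof to measure your argument against. Your sketch does follow the broad strategy of \cite{BFG}: exponential (Girsanov) martingales for the upper bound, a change-of-measure/ergodic-theorem argument for the lower bound, and the Lyapunov functions $u_n$ of Condition $C(\sigma)$ to get exponential tightness of $\mu_T$ and of $\|Q_T\|_1$ (your identity $\bb E_x[\tfrac{u_n(\xi_T)}{u_n(\xi_0)}e^{T\langle\mu_T,v_n\rangle}]=1$ and the use of items (ii)--(vi) is exactly the right mechanism).

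There is, however, one concrete gap: your mechanism for producing the constraint $\div Q=0$ in the rate function does not work. You claim that taking $g=\lambda\nabla f$ in $\sup_g\{\langle Q,g\rangle-\langle\mu,\psi_g\rangle\}$ and letting $\lambda\to\pm\infty$ makes the supremum $+\infty$ when $\div Q\neq 0$. But the gain $\langle Q,\lambda\nabla f\rangle=-\lambda\langle \div Q,f\rangle$ is only linear in $\lambda$, while the penalty $\langle\mu,\psi_{\lambda\nabla f}\rangle$ grows exponentially in $\lambda$, so this family of test functions yields nothing; indeed the pointwise Legendre computation shows that the \emph{unconstrained} supremum equals $\sum_{(y,z)\in E}\Phi\bigl(Q(y,z),Q^\mu(y,z)\bigr)$ for every $Q$, and this is finite for many non-divergence-free flows (two states, $r\equiv1$, $\mu$ uniform, $Q(1,2)=1$, $Q(2,1)=0$ gives the finite value $\log 2$). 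The zero-divergence constraint must instead come from the deterministic telescoping identity $T\langle Q_T,\nabla\phi\rangle=\phi(X_T)-\phi(X_0)$, bounded by $2\|\phi\|_\infty$ uniformly in $T$; this is why the variational characterization \eqref{cvi} carries the extra \emph{linear} term $\langle\phi,\div Q\rangle$ with $\phi$ decoupled from $F$, and taking $\phi=\lambda f$ there (with $F=0$) does send $I_{\phi,F}$ to $+\infty$ when $\div Q\neq0$. Without this ingredient your upper bound only produces the rate function with the constraint removed, which is strictly smaller. A secondary caveat: in the lower bound the tilted chain with rates $Q(y,z)/\mu(y)$ need not be irreducible or non-explosive, and $\log(\tilde r/r)$ need not be bounded, so the appeal to ``the ergodic theorem'' plus ``standard truncation'' hides genuine work done in \cite{BFG}.
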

\begin{remark}
Condition $C(\sigma)$ with $\sigma>0$ (or alternatively the hypercontractivity Condition 2.3 in \cite{BFG})
implies that $\langle\pi ,r\rangle<+\infty$ (see Lemma 3.9 in \cite{BFG}). 
\end{remark}
\medskip

We can finally state our new results.
\begin{theorem} \label{teoteo}
Assume that
 Condition $C(\sigma)$  holds with $\sigma >0$   (alternatively, assume   the
  hypercontractivity Condition 2.3  in \cite{BFG}).
 Then
  as $T\to+\infty$ the family of probability measures $\{\bb
  P_x\circ\mu_T ^{-1}\}$ on $\mc P(V)$
  satisfies a LDP with good rate function $\mathcal{I} $ such that
 \begin{equation}\label{rosetta}
 \mathcal{I}(\mu)=   \inf\big\{ I(\mu,Q)\,:\: Q \in L^1_+(E) \big\}\,.
\end{equation}
$\mathcal{I}(\mu)<+\infty$ if and only if $\langle \mu, r \rangle < +\infty$, in this case
  the above infimum is indeed attained at a unique flow  $Q^* \in L^1_+(E)$.
Moreover the following alternative  variational characterization holds
  \begin{equation}
  \mathcal{I}(\mu)=
  \begin{cases}
    \sup\big\{ - \langle \mu,  e^{-g} L e^{g} \rangle \,: \,
 g\in L^\infty(V) \big\} &  \qquad \textrm{if}\ \langle \mu, r\rangle<+\infty\,,\\
  +\infty &\qquad  \textrm{otherwise}\,.
 \end{cases}
  \label{freddo}
\end{equation}
\end{theorem}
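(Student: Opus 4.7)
My plan is to derive the LDP for $\mu_T$ by contraction from the joint LDP of Theorem~\ref{LDP:misura+flusso}, and then to establish the equality between the infimum and supremum representations by Fenchel-Rockafellar type duality. Since the projection $\mc P(V) \times L^1_+(E) \ni (\mu, Q) \mapsto \mu$ is continuous, the contraction principle immediately produces the LDP with good rate function $\mc I(\mu) = \inf_Q I(\mu, Q)$, which is \eqref{rosetta}. When $\langle \mu, r\rangle = +\infty$, Remark~\ref{silente} forces $I(\mu,Q) = +\infty$ for every $Q$, hence $\mc I(\mu) = +\infty$; conversely the admissible choice $Q \equiv 0$ gives $\mc I(\mu) \le \langle \mu, r\rangle$, so $\mc I(\mu) < +\infty$ whenever $\langle \mu, r\rangle < +\infty$. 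In the latter case, attainment of the infimum at a unique flow $Q^*$ follows from compactness of the sublevel sets of the good rate function $I$ together with the strict convexity of $\Phi(\cdot,p)$ on $[0,+\infty)$ for $p>0$.

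For the variational identity \eqref{freddo}, I would first dispatch the easy inequality $\mc I(\mu) \ge \sup_g -\langle \mu, e^{-g} L e^g\rangle$. The Legendre identity $\Phi(q,p) = \sup_{\lambda \in \bb R}[\lambda q - p(e^\lambda-1)]$ applied with $\lambda(y,z) = g(z) - g(y)$ for $g \in L^\infty(V)$ gives
\begin{equation*}
I(\mu, Q) \ge \sum_{(y,z)\in E}\bigl(g(z)-g(y)\bigr) Q(y,z) - \sum_{(y,z)\in E} Q^\mu(y,z)\bigl(e^{g(z)-g(y)}-1\bigr).
\end{equation*}
When $\div Q = 0$ and $Q \in L^1_+(E)$, Fubini (valid since $g$ is bounded) lets us rewrite the first sum as $-\langle g, \div Q\rangle = 0$, while the second sum equals $\langle \mu, e^{-g} L e^g\rangle$. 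Optimizing over $Q$ and $g$ yields the inequality.

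For the reverse inequality, in the finite state case Fenchel-Rockafellar applied to the convex functional $Q \mapsto \sum \Phi(Q,Q^\mu)$ on $\bb R^E$ together with the linear constraint $\div Q = 0$ produces the equality: the Lagrange multiplier dual to the divergence constraint is precisely the optimal $g$. The main obstacle lies in the countably infinite setting, where the formal optimality condition $g^*(y) - g^*(z) = \log\bigl(Q^*(y,z)/Q^\mu(y,z)\bigr)$ defines $g^*$ only up to constants on connected components and is not \emph{a priori} bounded. I would tackle this by exhausting $V$ with a nested sequence $V_n \uparrow V$ of finite subsets, solving the truncated dual problem on each $V_n$ via Fenchel-Rockafellar, and passing to the limit. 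The geometric input is a discrete Poincar\'e lemma on the graph: because $Q^*$ is divergence-free, the cocycle $\log(Q^*/Q^\mu)$ has vanishing circulation around every cycle and is thus the discrete gradient of a potential; the delicate step is to control the oscillation of the finite-volume potentials so that the limit lies in $L^\infty(V)$. Once such a bounded $g^* \in L^\infty(V)$ is produced, equality edge by edge in the Legendre bound gives $-\langle \mu, e^{-g^*} L e^{g^*}\rangle = I(\mu, Q^*) = \mc I(\mu)$ and closes the duality gap.
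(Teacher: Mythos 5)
Your first half --- the contraction principle, the finiteness criterion, uniqueness of $Q^*$ via strict convexity of $\Phi(\cdot,p)$, and the inequality $\mc I(\mu)\ge \sup_g -\langle\mu, e^{-g}Le^{g}\rangle$ obtained from the Legendre bound $\Phi(q,p)\ge \lambda q-p(e^{\lambda}-1)$ with $\lambda=\nabla g$ together with the integration by parts $\langle Q,\nabla g\rangle=-\langle g,\div Q\rangle=0$ --- is exactly the paper's argument and is correct.

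The reverse inequality is where your plan breaks, and the decisive flaw is your final sentence: you cannot in general produce a bounded $g^*\in L^\infty(V)$ achieving equality, because the supremum in \eqref{freddo} is typically \emph{not attained}. There are two obstructions. First, by Lemma \ref{verza} the minimizer satisfies $Q^*(y,z)=0$ on every edge of $E_\mu$ not lying in a strongly connected component of $(V_\mu,E_\mu)$ (e.g.\ take $\mu=\delta_x$: then $Q^*\equiv 0$ and $\mc I(\mu)=r(x)$, yet $-\langle\delta_x,e^{-g}Le^{g}\rangle=\sum_y r(x,y)(1-e^{g(y)-g(x)})<r(x)$ for every bounded $g$); the contribution $\Phi(0,Q^\mu(y,z))=\mu(y)r(y,z)$ of such edges is recovered in the supremum only by letting $g(z)-g(y)\to-\infty$. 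Compare Proposition \ref{simmetria}, where the formal optimizer $\log\sqrt{\mu/\pi}$ equals $-\infty$ off $\supp(\mu)$. Second, even within one strongly connected component the potential $g_\ell$ with $\nabla g_\ell=\log(Q^*/Q^\mu)$ need not be bounded, and your proposed ``control of the oscillation of the finite-volume potentials'' has no mechanism behind it. What the paper actually does is build a \emph{maximizing sequence}: it truncates each $g_\ell$ at level $n/3$ and adds a divergent shift $h(\ell)\,n$, where $h$ is an acyclic ordering of the (acyclic) quotient graph of components, so that the cross-component terms converge to $\mu(y)r(y,z)$ while dominated convergence --- justified by $\langle\mu,r\rangle<+\infty$ and by the absolute summability of $Q^*\,|\nabla g_\ell|$, which itself requires the estimate \eqref{enzo} and the cyclic decomposition --- handles the within-component terms. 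The case of infinitely many components is then reduced to the full-support case (where $|\mc L|=1$) via convexity of $J_1$ applied to $c\mu+(1-c)\pi$ and lower semicontinuity of $I_1$ as $c\to 1$. Your exhaustion by finite $V_n$ also leaves open how the truncated dual optimizers are compatible as $n$ grows; without the component structure and the acyclic ordering these are not merely technical points.
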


Since the projection map $\mathcal{P}(V) \times L^1_+(E) \ni (\mu,Q) \to \mu\in \mathcal{P}(V) $ is trivially continuous,  due to the contraction principle the first part of the theorem up to \eqref{rosetta}  follows from
Theorem  \ref{LDP:misura+flusso}.  Since $I(\mu,Q)=+\infty$ if $\langle \mu, r\rangle =+\infty$ and
$I(\mu,0)<+\infty$ if $\langle \mu, r \rangle <+\infty$,  we get that $\mathcal{I} (\mu)$ is finite if and only if $\langle \mu,r \rangle$ is finite. Finally,  note that since $I(\cdot, \cdot)$ is good, then the map $L^1_+(E) \ni Q \to  I(\mu,Q)$ is lower semicontinuous with compact level sets and therefore it has a minimum. The uniqueness of the minimizer follows from the fact that $I (\mu, \cdot)$ is strictly convex on the set $\{ Q\,:\; I(\mu, Q)<+\infty\}$, as can be easily derived from the strictly convexity
of $\Phi( \cdot, p)$ for $p>0$.
The non trivial task is  therefore to prove \eqref{freddo}.

We will use the following characterization of $I$ proved in \cite{BFG} (see formula (5.4) there)
\begin{equation}\label{cvi}
I(\mu,Q)= \sup I_{\phi,F} (\mu,Q)\,.
\end{equation}
In \eqref{cvi} the supreprum is among all pair $\phi,
  F$ with $\phi\in L^\infty(V)$, $F \in L^\infty(E)$, being respectively
$L^\infty(V)$ the set of bounded functions on vertices and
$L^\infty(E)$ the set of bounded functions on edges.
Moreover we have
\begin{equation}
 I_{\phi,F} (\mu,Q) :=
  \langle\phi,\div Q\rangle - \langle\mu,r^F-r\rangle+\sum_{(y,z)\in E}Q(y,z)F(y,z)
\end{equation}
where $r^F\colon V \to (0,+\infty)$ is defined by $r^F(y)
=\sum_{z\in V} r(y,z)e^{F(y,z)}$ and $\langle\phi,\div
Q\rangle=\sum_{y\in V} \phi(y)\div Q(y) $.
In \cite{BFG} formula \eqref{cvi} is proved with a slightly different
 class of functions but the argument can be clearly adapted to the present setting.
 See also Section \ref{rocky} for computations similar to \eqref{cvi}.

\medskip

In the reversible case we have the following additional result:
\begin{proposition}\label{simmetria} Assume the same setting of Theorem \ref{teoteo}.  Suppose that
the invariant measure $\pi$ is also reversible, i.e.
$
\pi(y)r(y,z)=\pi(z)r(z,y)
$ for all $y,z \in V$.
%holds, it can be easily shown that
%The minimizing flow $Q^*$ satisfies
%the relation
Suppose that $\mu \in \mathcal{P}(V)$ is such that $\mathcal{I}(\mu)<+\infty$ (i.e.\ $\langle \mu, r\rangle < +\infty$).
Then
$$
Q^*(y,z)=Q^*(z,y)=\sqrt{\mu(y)\mu(z)r(y,z)r(z,y)}
$$
is the minimizing flow in  \eqref{rosetta} and it holds
$$
\mc I (\mu)=\frac{1}{2}\sum_{y \in V}\sum_{z \in V}\left(\sqrt{\mu(y)r(y,z)}-\sqrt{\mu(z)r(z,y)}\right)^2\,.
$$
Moreover \eqref{freddo} admits a maximizing sequence $g^{(n)}$ that is a suitable approximating sequence
in $L^\infty(V)$ of the extended function $g:V\to \left\{-\infty\right\}\cup \mathbb R$
defined by
\begin{equation}
g(y):=\log \sqrt{\mu(y)/\pi(y) }\,.
\label{extended}
\end{equation}
\end{proposition}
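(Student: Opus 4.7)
The plan is to prove the proposition by first exhibiting $Q^*$ as admissible in \eqref{rosetta} and computing $I(\mu,Q^*)=V_\mu$ where $V_\mu:=\tfrac12\sum_{y,z}(\sqrt{\mu(y)r(y,z)}-\sqrt{\mu(z)r(z,y)})^2$, and then matching this from below via \eqref{freddo} with an approximating sequence for the formal maximizer $g(y)=\log\sqrt{\mu(y)/\pi(y)}$. Uniqueness of the minimizer in \eqref{rosetta} (noted after Theorem \ref{teoteo}) will then identify $Q^*$.

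\smallskip

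For admissibility: the AM--GM inequality gives $Q^*(y,z)\le \tfrac12[\mu(y)r(y,z)+\mu(z)r(z,y)]$, whence $\|Q^*\|_1\le\langle\mu,r\rangle<\infty$, and the divergence vanishes by the symmetry $Q^*(y,z)=Q^*(z,y)$. The key algebraic observation when computing $I(\mu,Q^*)$ is that
\[
Q^*(y,z)\,Q^*(z,y)=\mu(y)\mu(z)r(y,z)r(z,y)=Q^\mu(y,z)\,Q^\mu(z,y),
\]
so that pairing the edge $(y,z)$ with its reverse $(z,y)$ in the sum defining $I$ causes the entropic terms $q\log(q/p)$ to cancel exactly. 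The remaining linear contributions yield $\langle\mu,r\rangle-\sum\sqrt{\mu(y)\mu(z)r(y,z)r(z,y)}$, which after completing the square equals $V_\mu$. By \eqref{rosetta} we conclude $\mc I(\mu)\le V_\mu$.

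\smallskip

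For the matching lower bound, a formal substitution of $g=\log\sqrt{\mu/\pi}$ into $-\langle\mu,e^{-g}Le^g\rangle$, using reversibility in the form $r(y,z)\pi(y)/\pi(z)=r(z,y)$, produces exactly $V_\mu$. To justify this rigorously through \eqref{freddo}, $g$ must be replaced by an $L^\infty(V)$ approximation because $g(y)=-\infty$ whenever $\mu(y)=0$ and $g$ can be unbounded in either sign. I would use an adaptive construction: fix a finite exhaustion $V_n\uparrow V$ and a diverging sequence $c_n$, and define
\[
g^{(n)}(y):=g(y)\ \text{for}\ y\in V_n\cap\{\mu>0\},\qquad g^{(n)}(y):=-c_n\ \text{otherwise}.
\]
Each $g^{(n)}$ is bounded (the maximum of $|g|$ on $V_n\cap\{\mu>0\}$ is finite, and $c_n$ is the only other scale), and $g^{(n)}\to g$ in the extended real sense. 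Writing $-\langle\mu,e^{-g^{(n)}}Le^{g^{(n)}}\rangle=\langle\mu,r\rangle-S_n$ with $S_n=\sum\mu(y)r(y,z)e^{g^{(n)}(z)-g^{(n)}(y)}$, the diagonal contribution from $y,z\in V_n\cap\{\mu>0\}$ telescopes under reversibility to $\sum_{V_n\cap\{\mu>0\}}\sqrt{\mu(y)\mu(z)r(y,z)r(z,y)}$, and monotonically approaches $\sum\sqrt{\mu(y)\mu(z)r(y,z)r(z,y)}$.

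\smallskip

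The main obstacle is controlling the off-diagonal contributions to $S_n$. Terms in which exactly one of $y,z$ lies outside $V_n\cap\{\mu>0\}$ carry either a factor $e^{-c_n}$, which is tamed by the Cauchy--Schwarz bound $\sum_y r(y)\sqrt{\mu(y)\pi(y)}\le\sqrt{\langle\mu,r\rangle\langle\pi,r\rangle}$ (the latter finite by the remark following Theorem \ref{LDP:misura+flusso}); or a factor $e^{c_n}$, which is defeated by choosing $V_n$ so large (adaptively, after $c_n$ is fixed) that $e^{c_n}\max_{z\in V_n}\sqrt{\mu(z)/\pi(z)}\cdot\sum_{y\notin V_n}\mu(y)r(y)\to 0$; this is possible since $\langle\mu,r\rangle<\infty$ forces the tail sum to vanish as $V_n\uparrow V$. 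The purely ``exterior'' contribution $y,z\notin V_n\cap\{\mu>0\}$ is likewise negligible once the two balancing $e^{\pm c_n}$ factors are organised against the tail of $\mu r$. The resulting $S_n\to\sum\sqrt{\mu(y)\mu(z)r(y,z)r(z,y)}$ yields $-\langle\mu,e^{-g^{(n)}}Le^{g^{(n)}}\rangle\to V_\mu$, which combined with the upper bound gives $\mc I(\mu)=V_\mu$ and identifies $Q^*$ as the minimizer. The bookkeeping is intricate but elementary once reversibility has been used to swap inflow/outflow sums in the right places.
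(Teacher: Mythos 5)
Your overall architecture coincides with the paper's: an upper bound by evaluating $I(\mu,\cdot)$ at the symmetric candidate flow, and a matching lower bound obtained by feeding an $L^\infty$ approximation of $g=\log\sqrt{\mu/\pi}$ into \eqref{freddo}. The upper-bound half is correct: $Q^*$ is summable by AM--GM, divergence-free by symmetry, and the pairing of $(y,z)$ with $(z,y)$ does kill the entropic terms, leaving exactly $\tfrac12\sum_{y,z}(\sqrt{\mu(y)r(y,z)}-\sqrt{\mu(z)r(z,y)})^2$ (the paper gets the same number by minimizing bond-by-bond over symmetric flows; the two computations are interchangeable, and in both cases the identification of $Q^*$ as \emph{the} minimizer ultimately rests on the lower bound plus strict convexity).

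The gap is in your construction of $g^{(n)}$ for the lower bound, specifically in the control of the cross term where $y\notin V_n\cap\{\mu>0\}$ (with $\mu(y)>0$) and $z\in V_n\cap\{\mu>0\}$. You bound it by $e^{c_n}\max_{z\in V_n}\sqrt{\mu(z)/\pi(z)}\cdot\sum_{y\notin V_n}\mu(y)r(y)$ and propose to make this vanish by ``choosing $V_n$ so large, after $c_n$ is fixed''. But enlarging $V_n$ \emph{increases} $\max_{z\in V_n}\sqrt{\mu(z)/\pi(z)}$, which in general diverges along any exhaustion, and the product $\max_{z\in V_n}\sqrt{\mu(z)/\pi(z)}\cdot\sum_{y\notin V_n}\mu(y)r(y)$ need not tend to zero (take $\mu(x_k)/\pi(x_k)$ growing doubly exponentially while $\mu(x_k)r(x_k)$ decays only geometrically). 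At the same time you genuinely need $c_n\to+\infty$ to kill the $e^{-c_n}$ terms coming from sites with $\mu(z)=0$, so you cannot simply freeze $c_n$. As written, the two requirements on $(V_n,c_n)$ are not simultaneously achievable, and the argument is circular. The paper sidesteps all of this with the plain truncation $g^{(n)}=\max(\min(g,n),-n)$ (with $g^{(n)}=-n$ where $\mu=0$): truncation is monotone and $1$-Lipschitz, so $\nabla g^{(n)}$ has the same sign as $\nabla g$ and smaller modulus, whence $e^{\nabla g^{(n)}(y,z)}\le \max\bigl(1,e^{\nabla g(y,z)}\bigr)$ and $\mu(y)r(y,z)\bigl|1-e^{\nabla g^{(n)}(y,z)}\bigr|$ is dominated by $\mu(y)r(y,z)\bigl(2+e^{\nabla g(y,z)}\bigr)$, which is summable because $\sum\mu(y)r(y,z)e^{\nabla g(y,z)}=\sum\sqrt{\mu(y)r(y,z)\mu(z)r(z,y)}\le\langle\mu,r\rangle$. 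Dominated convergence then gives the limit in one stroke, with no cross terms to estimate. Replacing your exhaustion-plus-constant construction by this truncation repairs the proof.
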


The rest of the paper  is devoted to the proof of Theorem  \ref{teoteo} (see Section \ref{demo}) and the proof of Proposition \ref{simmetria} (see Section \ref{demo_simm}).  Most of the technical difficulties come from the case of $V$ infinite.   In Section \ref{rocky} we give for $V$ finite an alternative proof of Theorem \ref{teoteo} showing
that it is indeed a special case of the Fenchel-Rockafellar
Theorem. In the case $|V|<+\infty$ different proofs where given in \cite{WK} and \cite{BP}.

%%%%%%%%%%%%%%%%%%%%%%%%%%%%%%%%%%%%%%%%%%%
%%%%%%%%%%%%%%%%%%%%%%%%%%%%%%%%%%%%%%%%%%%
%%%%%%%%%%%%%%%%%%%%%%%%%%%%%%%%%%%%%%%%%%%
%%%%%%%%%%%%%%%%%%%%%%%%%%%%%%%%%%%%%%%%%%%
%%%%%%%%%%%%%%%%%%%%%%%%%%%%%%%%%%%%%%%%%%%
%%%%%%%%%%%%%%%%%%%%%%%%%%%%%%%%%%%%%%%%%%%

\section{Proof of Theorem \ref{teoteo}} \label{demo}

\subsection{Some preliminary results on oriented graphs}

Let $ (\mc V,\mc E)$ be an oriented graph. Given $y,z \in \mc V$, an \emph{oriented   path } from $y $ to $z$ in $ (\mc V,\mc E)$ is a finite  string  $(x_1,x_2, \dots, x_n)$  with $x_1=y$, $x_n = z$ and $(x_i,x_{i+1} ) \in \mc E$ for all $i=1, \dots, n-1$.
%Given  $f: \mc E \to \bb R$ and an oriented path $\gamma$ we set
%\begin{equation}
%\int _\gamma f := \sum _{i=1}^{n-1} f(x_i,x_{i+1} ) \,, \qquad \gamma = (x_1,x_2, \dots, x_n)\,.
%\end{equation}
A \emph{cycle} in $(\mc V, \mc E)$ is an oriented path $(x_1,x_2, \dots, x_n)$ with $x_1=x_n$. It is called \emph{self--avoiding} if $x_i \not =x_j$ for $1\leq i <j <n $. Given a cycle $C$ we denote by $\id _C$ the function on $\mc E$ taking value $1$ on the edges $(x_i, x_{i+1})$, $1\leq i <n$, and zero otherwise.

Let us now refer to the oriented graph $(V,E)$.  We denote by $\mc C$ the family of self--avoiding cycles in $(V,E)$. Given $C \in \mc C$,
 note that $\id_C$ is a divergence--free flow in $L^1_+(E)$.
 In \cite{BFG}[Lemma 4.1] it is proved that any divergence--free flow $Q \in L^1_+(E)$  can be written as
$ Q = \sum _{ C \in \mc C} \hat Q (C) \id _C $ for suitable nonnegative constants $\hat Q (C)$, $C \in \mc C$. The above decomposition has to be thought as  $Q(y,z) =\sum _{ C \in \mc C} \hat Q (C) \id _C (y,z)$ for each edge $(y,z) \in E$.

Take $\mu \in \mathcal{P}(V)$ such that $\langle \mu, r \rangle < +\infty$. Consider the oriented graph $( V_\mu, E_\mu)$ where
\begin{align*}
& E_\mu = \{(y,z) \,:\, Q^\mu(y,z)= \mu(y) r(y,z) >0\}\,,\\
& V_\mu= \{ y \in V\,:\, \exists z \in V \text{ with } (y,z) \in E_\mu \text{ or } (z,y) \in E_\mu\}\,.
\end{align*}
Trivially, the support of $\mu$ is included in $V_\mu$. If $ z \in V_\mu \setminus {\rm supp} (\mu)$  then there  exists $y \in {\rm supp} (\mu)$ with $r(y,z)>0$.

On the set $V_\mu$ we define the equivalence relation $y \sim  z$ as follows: $y\sim  z$ if and only if
in $(V_\mu,E_\mu)$ there exists an oriented path  from $y$ to $z$ as well as an oriented path  from $z$ to $y$.
We  call $ ( V_\mu^{(\ell)}) _{\ell \in \mc L
}$ the equivalence classes of $V_\mu$ under the relation $\sim$, and set $$E^{(\ell)}_\mu:= \{ (y,z) \in E_\mu\,:\, y,z \in V^{(\ell)}_\mu \}\,.$$ Above $\mc L$ is the index set of the equivalence classes, given by $\mc L=\bb N= \{1,2, \dots \}$ if there are infinite classes, or $\mc L = \{1,2, \dots, |\mc L|\}$ if there is a finite number of classes.

\smallskip
Given $Q \in L^1_+(E)$ and given $y\not =z$ in $V$ we set $Q(y,z)=0$ if $(y,z) \not \in E$.
The support of $Q$,  denoted by $E(Q)$, is defined as $E(Q)=\{(y,z) \in E\,:\, Q(y,z)>0\}$.

\smallskip

\begin{lemma}\label{verza}
Let $Q \in L^1_+(E)$ satisfy $I(\mu,Q)<+\infty$. Then $Q(y,z)=0$ if $y \not \sim z$ in
   $(V_\mu, E_\mu)$. In particular, $E(Q) \subset \cup_{\ell \in \mc L} E^{(\ell)}_\mu$.
\end{lemma}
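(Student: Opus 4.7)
The plan is to reduce the statement to the cycle decomposition of divergence-free flows recalled just above from \cite{BFG}[Lemma 4.1]. I would proceed in three short steps.

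First, I would exploit the structure of $\Phi$: finiteness of $I(\mu,Q)$ forces both $\div Q=0$ and $Q(y,z)=0$ whenever $Q^\mu(y,z)=0$, because $\Phi(q,0)=+\infty$ for $q>0$. In particular $E(Q)\subset E_\mu$, and every edge on which $Q$ puts positive mass automatically sits in the graph $(V_\mu,E_\mu)$. This is the only place where the hypothesis $I(\mu,Q)<+\infty$ is used.

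Second, since $\div Q=0$, I would invoke \cite{BFG}[Lemma 4.1] to write $Q=\sum_{C\in\mc C}\hat Q(C)\,\id_C$ with nonnegative coefficients $\hat Q(C)\geq 0$. Because the decomposition is termwise nonnegative and $E(Q)\subset E_\mu$, for any cycle $C$ with $\hat Q(C)>0$ every edge of $C$ must already lie in $E_\mu$, and therefore every vertex of $C$ lies in $V_\mu$.

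Third, given $(y,z)\in E$ with $Q(y,z)>0$, I would pick a self-avoiding cycle $C=(x_1,\dots,x_n)$, $x_1=x_n$, traversing the edge $(y,z)$ and with $\hat Q(C)>0$; such a cycle exists by the previous step and nonnegativity. Following $C$ forward from $y$ yields an oriented path in $(V_\mu,E_\mu)$ from $y$ to $z$, and continuing past $z$ around $C$ yields an oriented path from $z$ back to $y$. Hence $y\sim z$, which is the contrapositive of the first claim; the inclusion $E(Q)\subset\cup_{\ell\in\mc L} E_\mu^{(\ell)}$ is then immediate.

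I do not anticipate a genuine obstacle here: the combinatorial content of the lemma is already encoded in the cycle decomposition imported from \cite{BFG}, and the only remaining task is to pass from ``positive mass on an edge'' to ``positive mass on a cycle through that edge'', which is immediate from termwise nonnegativity of the decomposition.
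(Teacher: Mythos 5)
Your proposal is correct and follows essentially the same route as the paper: finiteness of $I(\mu,Q)$ forces $\div Q=0$ and $E(Q)\subset E_\mu$, the cycle decomposition of \cite{BFG}[Lemma 4.1] produces a self--avoiding cycle through $(y,z)$ with positive weight whose edges all lie in $E_\mu$, and splitting that cycle gives the two oriented paths establishing $y\sim z$. No gaps.
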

\begin{proof}
Suppose  that $Q(y,z)>0$.
Since $I(\mu,Q) < +\infty$ the flow $Q$ must be divergence--free. In addition, since $\Phi (q,p)=+\infty$ if $q>0$ and $p=0$, the flow $Q$ must have support contained in  $E_\mu$. By the cyclic decomposition, we can write $Q= \sum _{C \in \mc C} \hat Q(C) \bb I_C $.  In particular, $(y,z) \in C_0$ for some $C_0 \in \mc C$ with $\hat Q(C_0)>0$.  Since $Q$ has support
contained in $E_\mu$ it must be $(u,v) \in E_\mu$ for all $(u,v) \in C_0$. The  cycle $C_0$ can be divided in two oriented paths,  one from $y$ to $z$ and one from $z$ to $y$ in $(V_\mu, E_\mu)$. This implies that $y,z$ belong to the same equivalence class $V_\mu^{(\ell)}$. Since $E(Q) \subset E_\mu$ it must be  $(y,z) \in E_\mu$ and therefore $(y,z) \in E_\mu^{(\ell)}$.
\end{proof}

%\begin{lemma}
%If $y,z$ belong to the same equivalence class $V^{(\ell)}_\mu$ and if $(x_1, x_2, \dots, x_n)$ is an oriented path from $y$ to $z$ in $(V_\mu, E_\mu)$, then all sites $x_1, \dots, x_n$ belong to $V^{(\ell)}_\mu$.
%\end{lemma}

\subsection{Proof of Theorem \ref{teoteo}}

We define the functions  $I_1,J_1\colon \mc P(V)\to [0,+\infty]$ as the r.h.s. of \eqref{rosetta} and \eqref{freddo}, respectively:
\begin{equation}
  \label{defi-arpa1}
I_1(\mu) := \inf\big\{ I(\mu,Q)\,:\, Q\in L^1_+(E) \big\} \,,
\end{equation} and
\begin{equation}\label{defi-arpa2}
    J_1(\mu) := \left\{
 \begin{array}{ll}
    \sup\big\{ - \langle \mu,  e^{-g} L e^{g} \rangle \,:\,
  g\in L^\infty(V) \big\}\,, & \textrm{if}\ \langle \mu, r\rangle<+\infty\\
  +\infty & \textrm{otherwise}\,.
 \end{array}
 \right.
\end{equation}
As already explained we only need to prove  the equality $I_1(\mu)=J_1 (\mu)$.
By Remark \ref{silente} we can restrict to probability measures $\mu$ such that  $\langle \mu, r\rangle<+\infty$.

\smallskip

We first show  the inequality
$ I_1(\mu) \ge J_1(\mu)$. Given $g :V \to \bb R$, we define the gradient  $\nabla g : E \to \bb R$ as $\nabla g (y,z):= g(z)-g(y)$.
We now  observe that for any $g\in L^\infty(V)$ and for any divergence--free flow
$Q\in L_+^1(E)$   the following integration by parts formula is satisfied:
\begin{equation}\label{bypartscicli}
\begin{split}
\langle Q,\nabla g\rangle\,& =\,\sum_{(y,z)\in E}Q(y,z)(g(z)-g(y)) \\
& \,=\,
\sum _{y \in V} g(y) \bigl( \sum _{z:(z,y)\in E } Q(z,y)  - \sum _{z:(y,z)\in E } Q(y,z) \bigr) \,=\,-\langle \div  Q, g \rangle=0\,.
\end{split}
\end{equation}

%\begin{eqnarray}
%\langle Q,\nabla g\rangle&=&\sum_{(y,z)\in E}Q(y,z)(g(z)-g(y))\nonumber\\
%&=&\sum_{(y,x)\in E}\sum_{C\in \mathcal C}\hat Q(C)\id_C (y,z)(g(z)-g(y))\nonumber\\
%&=&\sum_{C\in \mathcal C}\hat Q(C)\sum_{(y,x)\in E}\id_C (y,z)(g(z)-g(y))=0\,.\label{bypartscicli}
%\end{eqnarray}
%In \eqref{bypartscicli} we used the
%cyclic decomposition \eqref{c:2} and the absolute summability.
%Indeed equation \eqref{bypartscicli} holds also if $g\not \in L^\infty(V)$
%provided that the series on right hand side of the first line in \eqref{bypartscicli}
%is absolutely summable.
After these observations the conclusion is simple.
We can   restrict the infimum in \eqref{defi-arpa1} to  divergence-free $Q$'s.
Fix $\phi,g\in L^\infty(V)$. We can use the variational characterization \eqref{cvi} of the rate
function $I$  and  deduce
\begin{eqnarray}
& &I(\mu,Q) %\sup_{\phi,F}I_{\phi,F}(\mu,Q)
\ge I_{\phi,\nabla g}(\mu,Q)
\nonumber\\
& & =\sum_{(y,z)\in E} \big\{
 - \mu(y) r(y,z) \big[e^{g(z)-g(y)}
  -1\big]+Q(y,z) [g(z) -g(y)]\big\} \nonumber \\
  & & = - \langle \mu, e^{-g} L e^{g} \rangle\,.\label{portonaccio}
\end{eqnarray}
In both the above  identities we used that   $Q$ is divergence-free. Minimizing over $Q$ and maximizing over $g$
in \eqref{portonaccio} we obtain that $I_1(\mu ) \geq J_1(\mu)$.

\smallskip

We show now the converse inequality $I_1 (\mu) \leq J_1 (\mu)$. As already observed after Theorem \ref{teoteo}
  the infimum defining $I_1$ in
\eqref{defi-arpa1} is attained as some flow $Q^*$, i.e.\  $I(\mu,Q^*)=I_1(\mu)$.  Taking $Q\equiv 0$ in
\eqref{defi-arpa1} we get
\begin{equation}\label{porchetta} I_1(\mu) = I(\mu,Q^*) \leq I(\mu, 0)= \langle \mu, r \rangle<+\infty \,.
\end{equation}
Due to  Lemma \ref{verza} we can write
\begin{equation}
I(\mu,Q^*)\,=\,\sum_{\ell\in \mc L } \sum_{(y,z)\in E^{(\ell) }_\mu }\Phi\Big(Q^*(y,z),\mu(y)r(y,z)\Big)
\,+\,\sum_{(y,z) \in E_\mu \setminus \cup _{\ell \in \mc L} E_\mu^{(\ell)}  }
\mu(y)r(y,z)\,.\label{deveaverlo}
\end{equation}
The next step is to show that $Q^*(y,z)>0$ for any $(y,z)\in E^{(\ell)}_\mu $. Suppose by contradiction that there exists
an edge $(y,z)\in E^{(\ell)}_\mu$ such that $Q^*(y,z)=0$. Take a  cycle  $C$ contained
in $(V^{\ell} _\mu,E_\mu^{\ell} )$ and containing the edge $(y,z)$ (it exists by definition of the equivalence relation $\sim$).
For any $\alpha \geq 0$ consider the perturbed flow $Q^*_\alpha:= Q^*+\alpha \id_C$. Then $I(\mu,Q^*_\alpha)<+\infty$. Moreover, the map $\bb R_+ \ni \alpha\to I(\mu, Q^*_\alpha) \in [0,+\infty)$  is continuous and $C^1$ on $(0,+\infty)$. Its derivative on $(0,+\infty)$  is given by
\begin{equation}\label{ninjago}
\frac{d}{d \alpha}\Big[I(\mu, Q^*_\alpha )\Big]=\sum_{(v,w)\in C}\log \frac{Q^*(v,w)+\alpha}{\mu(v)r(v,w)}\,.
\end{equation}
The  above derivative  becomes strictly negative for $\alpha$ small enough and this contradicts the fact that
$Q^*$ is a global minimizer.

Consider now an arbitrary cycle $C$ contained
in $(V^{(\ell)}_\mu ,E^{(\ell)}_\mu)$. We have just proved that $Q^*$ is strictly positive on the
edges of $C$. Hence, now we can conclude that  the above function $\bb R_+ \ni \alpha\to I(\mu, Q^*_\alpha) \in [0,+\infty)$   is   $C^1$ on all $\bb R_+$ (zero included) where its derivative is given by  \eqref{ninjago}.   Since $Q^*$ is a global minimizer the value $\alpha=0$
is a local minimum and consequently the value of the derivative in correspondence
of  $\alpha=0$ must be  zero. From
\eqref{ninjago} we get
\begin{equation}\label{rangers}
\sum_{(v,w)\in C}\log \frac{Q^*(v,w)}{\mu(v)r(v,w)}=0\,.
\end{equation}
The validity of \eqref{rangers} for any cycle $C$ contained in
$(V^{(\ell)}_\mu,E^{(\ell)}_\mu)$ implies that there exists a function $g_\ell: V^{(\ell)}_\mu \to \mathbb R$
such that
\begin{equation}\label{devbgn}
\log \frac{Q^*(y,z)}{\mu(y)r(y,z)}=g_\ell(z)-g_\ell(y)\,, \qquad \forall (y,z)\in  E^{(\ell)}_\mu\,.
\end{equation}
The function $g_\ell$ is determined up to an arbitrary additive constant
in the following way. Let $y^*$ be an arbitrary fixed element of $V^{(\ell)}_\mu$
and set  $g_\ell(y^*):=0$. For any $z\in V^{(\ell)}_\mu$ consider an arbitrary
oriented path $(z_1,\dots ,z_n)$ going from $y^*$ to $z$ in $(V_\mu, E_\mu )$  and define
\begin{equation}\label{defgl}
g_\ell(z):=\sum_{i=1}^{n-1}\log \frac{Q^*(z_i,z_{i+1})}{\mu(z_i)r(z_i,z_{i+1})}\,.
\end{equation}
If we prove that $g_\ell$ is well defined, i.e.\ that the  above
 definition
\eqref{defgl} does not depend on the  chosen path, then it is immediate to check   \eqref{devbgn}.

To show that the definition is well posed, fix  an oriented path $(u_1, \dots, u_k)$ from $z$ to $y^*$ in $(V_\mu, E_\mu )$ (it exists since $y^* \sim z$). Then  $C=(z_1, \dots, z_n, u_2, \dots , u_k)$ is a cycle going through $y^*,z$. It is trivial to check that all points in $C$ are $\sim$--equivalent to $y^*,z$, hence $C$ is a cycle in $( V^{(\ell)}_\mu, E^{(\ell)}_\mu)$.
Applying \eqref{rangers} we get
\begin{equation}\label{nido100}\sum_{i=1}^{n-1}\log \frac{Q^*(z_i,z_{i+1})}{\mu(z_i)r(z_i,z_{i+1})}=-
\sum_{j=1}^{k-1}\log \frac{Q^*(u_j,u_{j+1})}{\mu(u_j)r(u_j,u_{j+1})}\,.
\end{equation}
This shows that the l.h.s. does not depend on the particular oriented path $(z_1,\dots ,z_n)$ from $y^*$ to $z$, since the r.h.s. is path--independent.
Hence
 $g_\ell$ is well defined.

\medskip

The function $g_\ell$ does not necessarily belong to $L^\infty(V^{(\ell)}_\mu)$,
nevertheless we can improve a result similar to   \eqref{bypartscicli}:

\begin{claim}\label{eunclaim}
The series $\sum_{(y,z)\in E_\mu ^{(\ell)} }Q^*(y,z)(g_\ell(z)-g_\ell(y))$ is absolutely convergent and moreover
\begin{equation}\label{quante}
\sum_{(y,z)\in E_\mu ^{(\ell)} }Q^*(y,z)(g_\ell(z)-g_\ell(y))=0\,.
\end{equation}
\end{claim}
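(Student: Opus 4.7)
The plan is to establish absolute convergence first, and then deduce the vanishing of the sum by truncating $g_\ell$ to bounded functions and passing to the limit in the divergence-free identity \eqref{bypartscicli} via dominated convergence.

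For absolute convergence, I would exploit the defining relation \eqref{devbgn}, which on $E_\mu^{(\ell)}$ reads $g_\ell(z)-g_\ell(y) = \log\bigl(Q^*(y,z)/(\mu(y)r(y,z))\bigr)$. Recall that for $q,p>0$ one has $q\log(q/p) = \Phi(q,p) + (q-p)$, so
\begin{equation*}
\bigl|Q^*(y,z)\bigl(g_\ell(z)-g_\ell(y)\bigr)\bigr|
\le \Phi\bigl(Q^*(y,z),\mu(y)r(y,z)\bigr) + Q^*(y,z) + \mu(y)r(y,z).
\end{equation*}
Summing over $(y,z)\in E_\mu^{(\ell)}$ and using the fact that $Q^*(y,z)>0$ there (proved right before the claim), the right-hand side is bounded by $I(\mu,Q^*) + \|Q^*\|_1 + \langle\mu,r\rangle$, which is finite thanks to \eqref{porchetta}, the membership $Q^*\in L^1_+(E)$, and the standing hypothesis $\langle\mu,r\rangle<+\infty$.

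For the identity itself I would introduce the truncations $g_\ell^{(N)}(y):=(g_\ell(y)\wedge N)\vee(-N)$ and extend them by $0$ on $V\setminus V_\mu^{(\ell)}$; these extended functions belong to $L^\infty(V)$. Applying \eqref{bypartscicli} to each $g_\ell^{(N)}$ and using Lemma \ref{verza} to restrict the sum to the support of $Q^*$, I observe that if $Q^*(y,z)>0$ then $y\sim z$ in $(V_\mu,E_\mu)$, so $y,z$ belong to the same class $V_\mu^{(\ell')}$; for $\ell'\neq\ell$ the truncated gradient of $g_\ell^{(N)}$ vanishes there, and the surviving contribution is exactly over $E_\mu^{(\ell)}$. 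Hence
\begin{equation*}
\sum_{(y,z)\in E_\mu^{(\ell)}} Q^*(y,z)\bigl(g_\ell^{(N)}(z)-g_\ell^{(N)}(y)\bigr) = 0 \qquad \text{for every } N.
\end{equation*}

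Since truncation to $[-N,N]$ is $1$-Lipschitz, each summand is dominated in absolute value by $Q^*(y,z)|g_\ell(z)-g_\ell(y)|$, which is summable by the first step. Dominated convergence then delivers \eqref{quante} upon sending $N\to\infty$. The main obstacle is the possible unboundedness of $g_\ell$, which forbids a direct application of \eqref{bypartscicli}; the estimate $|q\log(q/p)|\le \Phi(q,p)+q+p$ is precisely what compensates for this and makes the truncation-and-limit argument close.
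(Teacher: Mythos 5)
Your proof is correct, and while the first half (absolute convergence) is essentially the paper's own argument, the second half takes a genuinely different route. For absolute convergence you use the identity $q\log(q/p)=\Phi(q,p)+(q-p)$ together with \eqref{devbgn} and the finiteness of $I(\mu,Q^*)$, $\|Q^*\|_1$ and $\langle\mu,r\rangle$; this is the same estimate as the paper's inequality \eqref{enzo} and the bound $I(\mu,Q^*)+\|Q^*-Q^\mu\|_1$ from \eqref{porchetta}. For the vanishing of the sum, however, the paper goes through the cyclic decomposition $Q^*=\sum_{C}\hat Q(C)\id_C$ of \cite{BFG}: it rewrites the series as a double sum over cycles and edges, uses absolute convergence to interchange the order of summation, and observes that the gradient of $g_\ell$ sums to zero along each cycle. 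You instead truncate $g_\ell$ to $L^\infty(V)$, apply the already-established integration-by-parts identity \eqref{bypartscicli} for bounded functions and divergence-free summable flows, localize to $E_\mu^{(\ell)}$ via Lemma \ref{verza} (correctly noting that $E(Q^*)\subset\cup_{\ell'}E_\mu^{(\ell')}$ so cross-class edges never appear and the zero extension kills the other classes), and pass to the limit by dominated convergence using the $1$-Lipschitz property of truncation and the summable majorant from the first step. Both arguments ultimately rest on the same absolute-convergence estimate; the paper's version stays entirely within the cycle-decomposition framework it has already built, while yours is more elementary in that it only needs the global divergence-free identity for bounded test functions plus a standard limiting argument, at the cost of introducing the truncation and the extension-by-zero bookkeeping.
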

Since the series is absolutely convergent the l.h.s. of \eqref{quante} does not depend on the
order of summation and therefore is
 well defined.
 \begin{proof}[Proof of the claim]
 By the triangle inequality we have
 \begin{equation}\label{enzo}
q \left | \log (q/p) \right| \leq \Phi (q,p) + |q-p| \,,\qquad  q,p >0\,.
\end{equation}
%Indeed, since $\Phi(q,p) \geq 0$ it must be $q \log (q/p) \geq q-p$.  Distinguishing the cases $q \log (q/p) \leq 0$ and $q \log (q/p) > 0$
%it is simple to conclude.
Using the inequality \eqref{enzo} we have
\begin{eqnarray}
%\text{l.h.s. of \eqref{mammina} }
&&\sum_{(y,z)\in E_\mu^{(\ell)} }Q^*(y,z)\left|g_\ell(z)-g_\ell(y)\right| \nonumber \\
&\leq &
\sum_{(y,z)\in E(Q^*)\subset E_\mu }Q^*(y,z)\left|\log\frac{Q^*(y,z)}{\mu(y)r(y,z)}\right| \nonumber \\
&\leq & I(\mu,Q^*)+\|Q^*-Q^\mu \|_1<+\infty\,.\label{fame}
\end{eqnarray}
Above we have used \eqref{porchetta} and the fact   that $Q^\mu \in L^1_+(E)$ since   $\langle \mu, r \rangle < +\infty$.
This proves that the series  in the l.h.s. of \eqref{quante} is absolutely convergent. Trivially, using  the cyclic decomposition $Q^* = \sum_{C \in  \mc C} \hat Q(C) \id_C$, this is equivalent to the bound
\begin{equation}\label{mammina}
%\sum _{\substack{ C \in \mc C:\\ C \text{ is inside } (V^{(\ell)}, E^{(\ell)}) } }
  % \sum _{(y,z) \in  C}\hat Q(C) | (g_\ell(z)-g_\ell(y))|<+\infty\,.
\sum _{(y,z) \in E_\mu ^{(\ell)}}
        \sum _{\substack{ C \in \mc C:\\ C \text{ is inside } (V^{(\ell)}_\mu, E^{(\ell)}_\mu) }  }
         \hat Q(C)  |g_\ell(z)-g_\ell(y)|
           <+\infty\,.
\end{equation}
 Due to \eqref{mammina} and the
properties of absolutely convergent series (in particular, their invariance under permutation of the addenda) we get
\begin{multline}\label{pizzona}
 \sum _{\substack{ C \in \mc C:\\ C \text{ is inside } (V^{(\ell)}_\mu, E^{(\ell)}_\mu ) } }
 \Big(
  \sum _{(y,z) \in  C} \hat Q(C)  (g_\ell(z)-g_\ell(y))\Big)\\
  \,=\,
       \sum _{(y,z) \in E_\mu ^{(\ell)}} \Big(
        \sum _{\substack{ C \in \mc C\,: \; (y,z) \in C\,,  \\ C \text{ is inside } (V^{(\ell)}, E^{(\ell)}) } }
         \hat Q(C)  (g_\ell(z)-g_\ell(y))
           \Big)\,.
 \end{multline}
On the other hand, the l.h.s. of \eqref{pizzona} is trivially zero since each sum inside the brackets is zero. The r.h.s. of \eqref{pizzona} is simply the l.h.s. of \eqref{quante} (recall Lemma \ref{verza}), thus concluding the proof of our claim.
\end{proof}

Using \eqref{devbgn} and  \eqref{quante} we obtain
\begin{equation}\label{ridotta}
\sum_{(y,z)\in E_\mu ^{(\ell)}} \Phi\Big(Q^*(y,z),\mu(y)r(y,z)\Big)=\sum_{(y,z)\in E^{(\ell)}_\mu}\mu(y)r(y,z)\left(1-e^{\nabla g_\ell (y,z)}\right)\,.
\end{equation}

\smallskip

Recall that $\mc L$ is the index set of the equivalence classes for $\sim$ in $(V_\mu,E_\mu)$. We then consider  the   oriented graph $(\mc L , \mc E)$ where the oriented edges  are given by the pairs $(\ell, \ell')$ for which
there exists an edge $(y,z)\in E_\mu$ such that $y\in V_\mu^{(\ell)} $ and $z\in V_\mu^{(\ell')}$. Then the graph $(\mathcal L,\mathcal E)$
is   an oriented acyclic graph, i.e. it contains no  cycles.

\smallskip
We can now conclude the proof. First we consider the case when $|\mc L|<+\infty$. Then by Proposition
1.4.3 in \cite{BJ} the finite acyclic oriented graph $(\mathcal L,\mathcal E)$ admits
an acyclic ordering of the vertices. This means that there exists a bijection $\hat h: \mc L \to \mc L$  such that $\hat h(\ell)<\hat h(\ell')$ for any
$(\ell,\ell')\in \mathcal E$. Then we define $h(\ell):= |\mc L| - \hat h(\ell) +1 $ to get
a   bijection $h: \mc L \to \mc L$  such that $h(\ell)>h(\ell')$ for any
$(\ell,\ell')\in \mathcal E$.

\medskip

 Consider the sequence of functions $g^{(n)}\in L^\infty(V)\,, n\in \mathbb N$ defined by
\begin{equation}\label{quasi-asilo}
g^{(n)}(y):=
\begin{cases}
g_\ell^{(n)}(y)+h(\ell)n &\; \text{ if } y\in V^{(\ell)}_\mu  \text{ for some } \ell \in \mc L\,,\\
0 & \; \text{ otherwise}\,,
\end{cases}
\end{equation}
where
\begin{equation}\label{de-paperis}
g_\ell^{(n)}(y):=
\begin{cases}
g_\ell(y) &\; \text{ if} \ |g_\ell(y)|\leq \frac n 3\\
\frac{g_\ell(y)}{|g_\ell(y)|}\frac n 3 &\;  \text{ otherwise}\,.
\end{cases}
\end{equation}
%It is easy to check that, given $(y,z) \in E_\mu$,
%\begin{equation}\label{sfilza}
%\lim_{n\to +\infty}(1-e^{\nabla g^{(n)}(y,z)})=
%\begin{cases}
%(1-e^{\nabla g_\ell(y,z)}) &\; \text{ if } (y,z)\in E^{(\ell)}_\mu \text{ for some } \ell \in \mc L \,,\\
%1 &  \; \text{ if } (y,z) \in E_\mu  \setminus \cup_{\ell \in \mc L}E^{(\ell)}_\mu\,.
%\end{cases}
%\end{equation}
%We comment the second case (the first case, $(y,z)\in E^{(\ell)}_\mu $, is completely trivial).
%If $ (y,z) \in E_\mu  \setminus \cup_{\ell \in \mc L}E^{(\ell)}_\mu$ then $y$ and $z$ belong to two distinct equivalence classes: $y \in V^{(\ell)}_\mu$ and $z 	\in V^{(\ell') }$, $\ell \not = \ell'$. By definition of the edge set $\mc E$ it must be $(\ell, \ell ') \in \mc E$ and therefore $h (\ell)> h(\ell')$. Then, for $n$ large,  $\nabla g^{(n)}(y,z) = g_{\ell'}(z)- g _\ell (y) +\bigl( h(\ell')-h(\ell) \bigr) n\to -\infty$, thus leading to the thesis.
 We finally get
\begin{eqnarray}
&& J_1(\mu)\geq\lim_{n\to +\infty}-\langle \mu , e^{-g^{(n)}}Le^{g^{(n)}}\rangle \nonumber \\
& = &
% \lim_{n\to +\infty} \sum _{(y,z)\in E} \mu(y) r(y,z) \bigl( 1-e^{\nabla g ^{(n)} (y,z)}\bigr)=
  \lim_{n\to +\infty} \sum _{(y,z)\in E_\mu} \mu(y) r(y,z) \bigl( 1-e^{\nabla g ^{(n)} (y,z)}\bigr) \nonumber \\
&=&
\lim_{n\to +\infty}\Big[\sum_{\ell\in \mc L} \sum_{(y,z)\in E_\mu^{(\ell)} }\mu(y)r(y,z)\left(1-\-e^{\nabla g^{(n)}_\ell(y,z)}\right) \nonumber \\
 &+&\sum_{\ell\neq \ell'}\sum_{\substack{(y,z) \in E_\mu: \\ y \in V^{(\ell)} _\mu,\; z \in V^{(\ell')}_\mu} }
 \mu(y)r(y,z)\left(1-\-e^{ g^{(n)}_{\ell'}(y)-g^{(n)}_\ell(z)+[h(\ell')-h(\ell)]n}\right)\Big]%=I_1(\mu)
 \,.\label{finne}
\end{eqnarray}The above limit can be computed  applying the Dominated Convergence Theorem. To this aim we first observe that
\begin{equation}\label{santana}
\begin{cases}\bigl|\nabla g^{(n)}_\ell(y,z)\bigr|\,\leq \,\bigl|\nabla g_\ell(y,z)\bigr| &
\;  \text{  if } (y,z) \in E^{(\ell)}_\mu\,\\
 {\rm sign} \{  \nabla g^{(n)}_\ell(y,z)\} \,=\, {\rm sign} \{  \nabla g_\ell(y,z)\} &
\;  \text{  if } (y,z) \in E^{(\ell)}_\mu\,\\
g^{(n)}_{\ell'}(y)-g^{(n)}_\ell(z)+[h(\ell')-h(\ell)]n\,\leq \, -\frac 13 n
&  \; \text{ if } (y,z) \in E_\mu\,,\; y \in V^{(\ell)} _\mu,\; z \in V^{(\ell')}_\mu
\,.
\end{cases}
\end{equation}
Note that in the second case we have used that $(\ell, \ell') \in \mc E$ thus implying that $h(\ell') - h(\ell) \leq -1$. Note moreover that due to  \eqref{santana} we can write
$$  \bigl|1-\-e^{\nabla g^{(n)}_\ell(y,z)}\bigr|  \leq 1+ e^{\nabla g_\ell(y,z)}
%\mathds{1}( g_\ell(y,z)  \geq0)
 \,, \qquad (y,z) \in E^{(\ell)}_\mu \,.$$
Since $\langle \mu, r \rangle <+\infty$ and  due to \eqref{devbgn}  we are allowed to apply the
 the Dominated Convergence Theorem.  As a consequence, we get
\begin{equation}
\begin{split}
J_1 (\mu) & \geq \text{ r.h.s. of \eqref{finne} } \\ & = \sum_{\ell\in \mc L} \sum_{(y,z)\in E_\mu^{(\ell)} }\mu(y)r(y,z)\left(1-\-e^{\nabla g_\ell(y,z)}\right)
 +\sum_{ (y,z) \in E_\mu \setminus \cup _{\ell \in \mc L} E_\mu^{(\ell) } }
 \mu(y)r(y,z)\\
 & = I( \mu , Q^*)= I_1 (\mu)\,.
 \end{split}
\end{equation}
Note that the second equality is a byproduct of \eqref{deveaverlo} and \eqref{ridotta}.
This ends the proof of $I_1(\mu)=J_1(\mu)$  when $|\mc L|<+\infty$.
A special case with $|\mathcal L|<+\infty$ is when $\textrm{supp}(\mu)=V$. In this case $|\mathcal
L|=1$
since $E_\mu=E$ and the Markov chain $\xi$ is irreducible.

\medskip

We can now treat the general case.
  Let $\mu$ be an arbitrary probability measure
on $V$. We want to show that $J_1(\mu) \geq I_1(\mu)$.

We first observe that  $J_1(\pi)=0$, where
we recall that $\pi$ is the unique invariant measure of the Markov chain $\xi$. Indeed, since $1-e^x \leq x$ for all $x \in \bb R$,
% by Jensen inequality
we have for any $g\in L^\infty(V)$
\begin{equation}\label{jensen-bravo}
\sum_{(y,z)\in E}\pi(y)r(y,z)(1-e^{\nabla g(y,z)})\leq \sum_{(y,z)\in E}\pi(y)r(y,z)\nabla g(y,z)=0\,.
\end{equation}
The last equality in \eqref{jensen-bravo} follows by \eqref{bypartscicli} since $Q^\pi$ is a divergence-free element of $L^1_+(E)$. Equation \eqref{jensen-bravo} gives $J_1(\pi)\leq 0$ and the converse
inequality is obtained selecting in \eqref{defi-arpa2} a constant function $g$. This concludes the proof that $J_1(\pi)=0$.

 Since $\textrm{supp}(\pi)=V$ for any $c\in (0,1)$ $\textrm{supp}(c\mu+(1-c)\pi)=V$
so that from the result obtained in the case $|\mc L|<+\infty$ we know that
\begin{equation}\label{fac-fac}
J_1(c\mu+(1-c)\pi)=I_1(c\mu+(1-c)\pi)\,.
\end{equation}
Since $J_1$ is defined as a supremum of convex functions it is a convex function, hence
$J_1(c\mu+(1-c)\pi) \leq c J_1(\mu)+(1-c) J_1(\pi)=c J_1(\mu)$. Invoking \eqref{fac-fac}
we get
\begin{equation}\label{the-end}
J_1(\mu)\geq \liminf_{c\to 1} \frac{I_1(c\mu+(1-c)\pi)}{c}\geq I_1(\mu)\,.
\end{equation}
In the last inequality we have used the lower semicontinuity of $I_1$.
%%%%%%%%%%%%%%%%%%%%%%%%%%%%%%%%%%%%%%%

\section{Proof of Proposition \ref{simmetria}}\label{demo_simm}

We will use both the equivalent representations \eqref{rosetta} and \eqref{freddo} of the rate
function $\mathcal I$.  We take $\mu \in \mathcal{P}(V)$ with $\langle \mu, r \rangle < +\infty$  and
recall equation \eqref{extended} that defines the
extended function $g:V \to [-\infty,+\infty)$  as $g(y):=\log \sqrt{\mu(y)/\pi(y)}$, with the convention $\log 0:=-\infty$. We consider also the sequence of functions $g^{(n)}\in L^\infty(V)$ defined by
$$
g^{(n)}(y):=\left\{
\begin{array}{ll}
g(y)& \textrm{if}\ |g(y)|\leq n\,,\\
\frac{g(y)}{|g(y)|}n & \textrm{if}\ |g(y)|>n\,,
\end{array}
\right.
$$
where by $(-\infty)/(+\infty)$ we mean $-1$. We observe that (use $2ab \leq a^2+b^2$)
$$ \sum _{y \in V} \sum_{z \in V} \mu(y) r(y,z) e^{\nabla g (y,z) }= \sum _y \sum_z \sqrt{ \mu(y) r(y,z) \mu(z) r(z,y) }  \leq \langle \mu, r \rangle < +\infty.$$
The above bound and the inequality $|1-e^{\nabla g^{(n)} (y,z) }|\leq 1+ e^{\nabla g (y,z) }$, allows to apply the   Dominated Convergence Theorem. As a consequence, by
 an elementary computation we get
\begin{equation}
\lim_{n\to +\infty}-\langle \mu, e^{-g^{(n)}}Le^{g^{(n)}}\rangle
=\sum_{\left\{y,z\right\}\in E^u}\left(\sqrt{\mu(y)r(y,z)}-\sqrt{\mu(z)r(z,y)}\right)^2\,,
\label{lower}
\end{equation}
where with $E^u$ we denote the set of unordered bonds. Since we are considering the reversible case then necessarily if $(y,z)\in E$ then also $(z,y)\in E$. Consequently $\left\{y,z\right\}\in E^u$ when both $(y,z)$
and $(z,y)$ belong to $E$. Equation \eqref{lower} implies that $\mathcal I(\mu)$ is greater or equal to the right hand side of \eqref{lower}.

To prove the converse inequality we restrict the infimum in \eqref{rosetta} to symmetric
flows, i.e. to flows $Q$ such that $Q(y,z)=Q(z,y)$ for any $(y,z)\in E$. Then we can define
$S:E^u\to \mathbb R^+$ by $S(\left\{y,z\right\}):=Q(y,z)=Q(z,y)$. For symmetric flows
the rate function $I(\mu,Q)$ can be written as
\begin{equation}\label{symm}
\begin{split}
\sum_{\{y,z\}\in E^u}\Big[2S(\{y,z\}) & \log\frac{S(\{y,z\})}{\sqrt{\mu(y)r(y,z)\mu(z)r(z,y)}}
\\ &+\mu(y)r(y,z)+\mu(z)r(z,y)-2S(\{y,z\})\Big]\,.
\end{split}
\end{equation}
The minimization procedure in \eqref{symm} is easy since the zero divergence constraint is always satisfied.
We can then solve an independent variational problem for each unordered bond,  without any constraint apart
the non-negativity of $S$. On the bond $\{y,z\}$ the minimizer is
$$
S^*(\{y,z\})=\sqrt{\mu(y)r(y,z)\mu(z)r(z,y)}\,.
$$
Calling $Q^*$ the associated symmetric flow we get that $I(\mu,Q^*)$ coincides
with the right hand side of \eqref{lower}. %The minimizer $Q^*$ is unique since for any fixed $\mu$
%the rate function $I(\mu,Q)$ is strictly convex in $Q$.
 This completes the proof.

\section{Alternative proof of Theorem \ref{teoteo} for $V$ finite}\label{rocky}

As explained after Theorem \ref{teoteo} we only need to show the identity  $I_1=J_1$  (recall \eqref{defi-arpa1} and \eqref{defi-arpa2}).

\smallskip

In the finite case an interesting proof of this result is obtained observing that it is a special case
of the Fenchel-Rockafellar Theorem (see for example \cite{Bre}). In the case $|V|=+\infty$ this strategy
does not work since the continuity requirement in the following general statement is missing.
Consider a topological vector space  $X$ and its dual $X^*$.  Let
$\phi,\psi: X \to (-\infty,+\infty]$  be  two proper (i.e. not identically equal to $+\infty$) extended convex functions   such that $\phi+\psi$ is proper and there exists an $x_0\in X$ where either $\phi(x_0)<+\infty$ and $\phi$ is continuous at $x_0$ or $\psi(x_0)<+\infty$ and $\psi$ is continuous at $x_0$. The Fenchel-Rockafellar
Theorem states that
\begin{equation}
\inf_{x\in X}\left\{\phi(x)+\psi(x)\right\}=\sup_{f\in
X^*}\left\{-\phi^*(-f)-\psi^*(f)\right\}\,, \label{FeRo}
\end{equation}
where, given $\gamma: X \to \mathbb (-\infty,+ \infty]$,
$$
\gamma^*(f):=\sup_{x\in X}\left\{\langle f,
x\rangle-\gamma(x)\right\}\,, \qquad f\in X^*\,,
$$
is the Legendre transform of $\gamma$.

\smallskip

Fix $\mu \in \mathcal{P}(V)$ and recall the definition of the graph $(V_\mu, E_\mu)$ given at the beginning of Section \ref{demo}, as well as  the  equivalence relation $\sim$ leading to the equivalence classes $V^{(\ell)}_\mu$, $\ell \in \mc L$, and associated set of edges $E^{(\ell)}_\mu$. Since $V$ is finite the condition $\langle \mu, r \rangle < +\infty$ is automatically satisfied

We want to apply the  Fenchel-Rockafellar
Theorem with   $X=L^1(E_\mu )$ endowed of the standard $L^1$--norm and
$X^*=L^\infty(E_\mu)$. Clearly in the finite case we could work with the simpler choice $X=X^*=\mathbb R^{E_\mu}$
with the Euclidean topology (we use instead a more general notation having
in mind some possible extensions to the infinite case). Our choice for the functions $\phi, \psi$ is
\begin{align*}
& \phi(Q):=
\begin{cases}
    \displaystyle{
    \sum_{(y,z)\in E_\mu} \Phi \big( Q(y,z),Q^\mu(y,z) \big)
    }& \textrm{if  } \;  Q\in L^1_+(E_\mu)\,,
    \\
    \; +\infty  & \textrm{otherwise}.
  \end{cases}
\\
&
\psi(Q):=
\begin{cases}
0 & \textrm{if} \; \div Q=0\,,\\
\; +\infty & \textrm{otherwise\,.}
\end{cases}
\end{align*}
Given $Q \in L^1(E_\mu)$ the divergence $\div Q: V \to \bb R$ is still defined as
$$ \div Q(y)= \sum_{(y,z) \in E_\mu} Q(y,z)- \sum_{(z,y) \in E_\mu} Q(z,y)\,.
$$

The above functions $\phi, \psi$ are proper convex functions (recall that $\Phi(\cdot, p)$ is convex for any $p \geq 0$).  Moreover, since $\Phi(\cdot, p)$ is a continuous function on $(0,+\infty)$ for all $p>0$,  we conclude that $\phi(Q)<+\infty$ and $\phi$ is continuous at $Q$ for any $Q \in L^1(E_\mu)$ such that
$Q(y,z) >0$ for all $(y,z) \in E_\mu$. Finally, we note that the function $\phi+\psi$ is proper since finite on the zero flow.  Note that working with $E$ instead  of $E_\mu$, neither $\psi$ nor $\phi$ would have  satisfied the condition of boundedness and continuity in at least one point.

Clearly it holds %(recall Remark \ref{silente})
\begin{equation}\label{camillo}
I_1 (\mu)= \inf_{Q\in
L^1_+(E)}I(\mu,Q)  = \inf_{Q\in L^1(E_\mu )}\left\{\phi(Q)+\psi(Q)\right\} \,.\end{equation}
The fact that $I_1(\mu)=J_1(\mu)$ then follows as a byproduct of the  Fenchel-Rockafellar
Theorem and the following Claims \ref{claim1}  and \ref{claim2}.
\begin{claim}\label{claim1}
For any  $f \in L^\infty (E_\mu)$ we have
\begin{equation}
\phi^*(f)=\sum_{(y,z)\in
E_\mu }\mu(y)r(y,z)\left(e^{f(y,z)}-1\right)\,,\qquad f\in L^\infty(E_\mu)\,.
\end{equation}
\end{claim}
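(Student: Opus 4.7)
The plan is to compute the Legendre transform by first reducing to an edge-by-edge optimization and then solving the one-dimensional problem explicitly. Since $\phi(Q) = +\infty$ whenever $Q$ fails to be nonnegative, the supremum in
$$ \phi^*(f) = \sup_{Q \in L^1(E_\mu)} \Bigl\{ \langle f, Q \rangle - \phi(Q) \Bigr\} $$
can be restricted to $Q \in L^1_+(E_\mu)$. Because $E_\mu$ is finite (as $V$ is finite), both $\phi$ and $\langle f, \cdot\rangle$ decompose as finite sums indexed by $(y,z) \in E_\mu$, and the nonnegativity constraint is separable. Consequently the supremum splits:
$$ \phi^*(f) = \sum_{(y,z) \in E_\mu} \sup_{q \geq 0} \Bigl\{ f(y,z)\, q - \Phi\bigl(q,\mu(y) r(y,z)\bigr)\Bigr\}. $$

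The next step is to evaluate, for each $(y,z) \in E_\mu$, the one-dimensional Legendre transform
$$ \Phi^*(a, p) := \sup_{q \geq 0} \bigl\{ a q - \Phi(q, p) \bigr\}, \qquad p = \mu(y) r(y,z) > 0, \quad a = f(y,z). $$
For $p > 0$ the function $q \mapsto \Phi(q, p) = q \log(q/p) - (q-p)$ is smooth, strictly convex on $(0, +\infty)$, with $\Phi(0, p) = p$ (by the definition in \eqref{Phi}) and derivative $\log(q/p)$. Setting the derivative of $a q - \Phi(q, p)$ equal to zero gives $a = \log(q/p)$, hence $q^* = p\, e^a$, which indeed lies in $(0, +\infty)$ for every $a \in \mathbb{R}$. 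Substituting back yields
$$ \Phi^*(a, p) = a\, p\, e^a - \bigl( p e^a \cdot a - (p e^a - p) \bigr) = p\bigl( e^a - 1 \bigr). $$
One should also check the boundary $q = 0$ (giving value $-\Phi(0, p) = -p = p(e^a - 1)\big|_{a \to -\infty}$, and dominated by $q^* = p e^a$ for finite $a$), so no correction at the boundary is needed.

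Summing the edgewise suprema gives exactly
$$ \phi^*(f) = \sum_{(y,z) \in E_\mu} \mu(y) r(y,z) \bigl( e^{f(y,z)} - 1 \bigr), $$
as claimed. There is no real obstacle here: the separability reduces everything to the classical Cramér transform of the Poisson rate function, and the calculus of the one-variable optimization is routine. The only point deserving a line of care is noting $p > 0$ for every $(y,z) \in E_\mu$ (which is exactly the reason we restrict to $E_\mu$ rather than $E$), so that the formula $\Phi(q, p) = q \log(q/p) - (q-p)$ applies throughout and the maximizer $q^* = p e^{f(y,z)}$ is finite and strictly positive.
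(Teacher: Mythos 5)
Your proposal is correct and follows essentially the same route as the paper: reduce the Legendre transform to an independent one-dimensional optimization on each edge of $E_\mu$ and observe that $\sup_{q\ge 0}\{aq-\Phi(q,p)\}=p(e^a-1)$, attained at $q^*=pe^a$. The paper states only this edgewise maximization; your additional checks (restriction to $L^1_+(E_\mu)$, strict positivity of $p=\mu(y)r(y,z)$ on $E_\mu$, and the boundary value at $q=0$) are the routine details it leaves implicit.
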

\begin{proof}
For each edge $(y,z)\in E_\mu$, the function $\bb R _+ \ni u \to f(y,z) u - \Phi(u, Q^\mu(y,z) )$  has maximum value $\mu(y)r(y,z)\left(e^{f(y,z)}-1\right)$    attained at $u = e^{f(y,z)} Q^\mu (y,z)=e^{f(y,z) }\mu(y) r(y,z)$.
% Since $ \sum _{(y,z)\in E_\mu}  e^{f(y,z)} Q^\mu (y,z)\leq e^{\|f\|_\infty} \|Q^\mu\|_1= e^{\|f\|_\infty} \langle \mu, r \rangle < +\infty$, it is simple to conclude.
\end{proof}

Before stating Claim \ref{claim2} we prove  a geometric characterization of gradient functions on
oriented graphs. To this aim we fix some language.

We define $E_\mu^*$ as the set of oriented edges $(y,z)$ such that $(y,z) \in E_\mu$ or $(z,y) \in E_\mu$. Given a function $f$ on $E_\mu$ we can extend it to a function $f_*$ on  $E_\mu^*$  setting
$$ f_*(y,z):=\begin{cases}
\;\; f(y,z) & \text{ if } (y,z) \in E_\mu\,,\\
-f(z,y) & \text{ if } (y,z) \in E^*_\mu \setminus  E_\mu\,.
\end{cases}
$$
We say that $(x_1, x_2, \dots,x_n)$ is a generalized path from $x_1$ to $x_n$ in $(V_\mu, E_\mu)$ if for any $i=1,\dots, n-1$ it holds $(x_i,x_{i+1})\in E^*_\mu$ (in other words,  $(x_1, x_2, \dots,x_n)$ is an oriented path in $(V_\mu, E^*_\mu)$).
Given a generalized path $\gamma $ we define $\int _\gamma f$ as
$$ \int _\gamma f := \sum _{i=1}^{n-1} f_*(x_i,x_{i+1}) $$
if $\gamma$ is given by $x_1, x_2, \dots,x_n$.

\begin{lemma}\label{calduccio}
Given $f\in
L^\infty(E_\mu)$ there exists a $g:V\to \mathbb R$ such that
$f=\nabla g $, i.e. \ such that $f(y,z)= g(z)-g(y)$ for all $(y,z) \in E_\mu$, if and only if
for any  pair of   generalized paths $\gamma$ and $\gamma'$ having the same initial point and the same final point it holds
\begin{equation}
\label{ciocco}
 \int _\gamma  f= \int _{\gamma'} f\,.\end{equation}
\end{lemma}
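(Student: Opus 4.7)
The plan is to prove the necessity of \eqref{ciocco} by a direct telescoping argument and its sufficiency by constructing $g$ via integration of $f_*$ along generalized paths, with the path-independence hypothesis guaranteeing well-posedness.

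For the ``only if'' direction I would first observe that when $f = \nabla g$ on $E_\mu$ one automatically has $f_*(y,z) = g(z) - g(y)$ for every $(y,z) \in E_\mu^*$: indeed, if $(y,z) \in E_\mu^* \setminus E_\mu$ then $(z,y) \in E_\mu$, so
\begin{equation*}
f_*(y,z) = -f(z,y) = -\bigl(g(y)-g(z)\bigr) = g(z)-g(y).
\end{equation*}
Hence for any generalized path $\gamma = (x_1,\dots,x_n)$ the sum $\int_\gamma f = \sum_{i=1}^{n-1}\bigl(g(x_{i+1})-g(x_i)\bigr)$ telescopes to $g(x_n) - g(x_1)$, which depends only on the endpoints of $\gamma$.

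For the ``if'' direction I would first argue that the binary relation on $V_\mu$ defined by $y \approx z$ iff there exists a generalized path in $(V_\mu, E_\mu)$ from $y$ to $z$ is an equivalence relation: reflexivity comes from the single-vertex path $(y)$, symmetry from the fact that $E_\mu^*$ is symmetric so paths can be reversed, and transitivity from concatenation. Let $\{W_\alpha\}_\alpha$ denote the equivalence classes of $V_\mu$ under $\approx$. On each $W_\alpha$ I fix a basepoint $y_\alpha^*$, set $g(y_\alpha^*) := 0$, and for each $z \in W_\alpha$ define $g(z) := \int_{\gamma_z} f$, where $\gamma_z$ is an arbitrary generalized path from $y_\alpha^*$ to $z$; by hypothesis \eqref{ciocco} this value is independent of the choice of $\gamma_z$, so $g$ is well defined on $V_\mu$, and I extend it arbitrarily (say by zero) to $V \setminus V_\mu$. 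To verify $f = \nabla g$ on $E_\mu$, given $(y,z) \in E_\mu$ both endpoints lie in the same class $W_\alpha$ (since $(y,z)$ is itself a generalized path from $y$ to $z$), and concatenating a generalized path from $y_\alpha^*$ to $y$ with the single edge $(y,z)$ produces a generalized path from $y_\alpha^*$ to $z$ whose $f$-integral equals $g(y) + f_*(y,z) = g(y) + f(y,z)$, yielding $g(z) - g(y) = f(y,z)$ as required. The only nontrivial point in the argument, and the single place where the hypothesis is used, is the well-posedness of $g(z)$; no other genuine obstacle arises, since the groupoid-like structure of generalized paths (reversibility and concatenability) makes the ``integration along paths'' recipe unambiguous once \eqref{ciocco} is granted.
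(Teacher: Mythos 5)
Your proof is correct and follows essentially the same route as the paper: the forward direction by observing $f_*(y,z)=g(z)-g(y)$ on $E_\mu^*$ and telescoping, and the converse by introducing the equivalence relation generated by generalized paths and defining $g$ classwise as the path integral from a basepoint, with \eqref{ciocco} giving well-posedness. No gaps.
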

\begin{proof} If is simple to check that
if $f= \nabla g$  then $f_*(y,z)= g(z)-g(y)$. This implies that $\int _\gamma f= g(z)-g(y)$ for any generalized path $\gamma$ in  $(V_\mu, E_\mu)$ from $y$ to $z$. Therefore \eqref{ciocco} is satisfied whenever $\gamma,\gamma'$ have the same extremes.

Suppose on the other hand that \eqref{ciocco} is satisfied for any $\gamma, \gamma'$ having the same extremes. We introduce on $V_\mu$ the equivalent relation $\sim^*$ saying that $y\sim^* z
$ if there exists a generalized path from $y$ to $z$ in  $(V_\mu, E_\mu)$. It is simple to check that we have indeed an equivalence relation.  For each equivalence class $W \subset V_\mu$ we fix a reference site $x_* \in W$ and define $g$ on $W$ setting $g(y):= \int _\gamma f$  where $\gamma$ is any generalized path from $x_*$ to $y$. Due to \eqref{ciocco} the definition is well posed. Let us check that $\nabla g =f$. To this aim we fix $(y,z)\in E_\mu$. Clearly this implies $y\sim_* z$. Call $x_*$  the reference site of their equivalence class and fix a path $\gamma $ from $x_*$ to $y$. If $\gamma=(x_1, x_2, \dots ,x_n)$ set $\tilde \gamma:= (x_1, x_2, \dots, x_n, z)$. The path $\tilde \gamma$ is a generalized path from $x_*$ to $z$.  By definition $g(z) = \int _{\tilde \gamma} f= f(y,z) +\int _{\gamma } f= f(y,z)+g(y)$, thus concluding the proof.
\end{proof}

We can now state our final claim:

\begin{claim}\label{claim2}
For any  $f \in L^\infty (E_\mu)$ we have \begin{equation}
\psi^*(f)=
\begin{cases}
0 & \textrm{if} \; f= \nabla g \text{ for some $g:V \to \mathbb R$}\,,\\
+\infty & \textrm{otherwise}\,.
\end{cases}
\label{trota}
\end{equation}
%where $f= \nabla g$ means that $f(y,z)= g(z)-g(y)$ for any $(y,z) \in E_\mu$.
\end{claim}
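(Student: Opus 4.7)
The goal is to compute
$$\psi^*(f)=\sup_{Q\in L^1(E_\mu)}\bigl\{\langle f,Q\rangle - \psi(Q)\bigr\}=\sup\bigl\{\langle f,Q\rangle : Q\in L^1(E_\mu),\ \div Q=0\bigr\}.$$
The plan is to split into the two cases and use Lemma \ref{calduccio} as the bridge between the gradient condition and the behavior of $f$ against divergence-free flows.

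\textbf{Easy direction: $f=\nabla g$.} For every $Q\in L^1(E_\mu)$ the same computation as in \eqref{bypartscicli} (valid here because sums are finite and the sign of $Q$ plays no role in the rearrangement) gives $\langle \nabla g, Q\rangle = -\langle g,\div Q\rangle$. If moreover $\div Q=0$ the pairing vanishes, so the supremum defining $\psi^*(f)$ is over a set identically equal to $0$. Since $Q\equiv 0$ is admissible, the supremum is attained and $\psi^*(f)=0$.

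\textbf{Hard direction: $f$ is not a gradient.} By Lemma \ref{calduccio} the failure of \eqref{ciocco} produces two generalized paths $\gamma=(x_1,\dots,x_n)$ and $\gamma'=(y_1,\dots,y_m)$ with $x_1=y_1$ and $x_n=y_m$ but $\int_\gamma f\neq \int_{\gamma'}f$. Concatenating $\gamma$ with the reverse of $\gamma'$ produces a closed generalized loop $C=(z_1,z_2,\dots,z_k,z_1)$ in $(V_\mu,E_\mu^*)$ along which $\int_C f_*\neq 0$. The plan is to associate to $C$ a signed divergence-free flow $Q_C\in L^1(E_\mu)$ defined, for each consecutive pair $(z_i,z_{i+1})$ in $C$, by adding $+1$ to $Q_C(z_i,z_{i+1})$ if $(z_i,z_{i+1})\in E_\mu$ and $-1$ to $Q_C(z_{i+1},z_i)$ if instead the reverse edge lies in $E_\mu$. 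A routine check at each vertex (each visit contributes equal incoming and outgoing mass, the sign conventions making the two cases agree) shows $\div Q_C=0$, while by construction $\langle f, Q_C\rangle = \int_C f_*\neq 0$.

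\textbf{Conclusion.} Since $\lambda Q_C$ is divergence-free for every $\lambda\in\mathbb R$ and $\langle f,\lambda Q_C\rangle = \lambda\,\int_C f_*$, letting $\lambda\to\pm\infty$ with the appropriate sign gives $\psi^*(f)=+\infty$. The only subtle point, and the one to verify carefully, is the divergence-free property of $Q_C$: a generalized cycle may revisit vertices or traverse the same unoriented edge several times (possibly in opposite directions), so one should view $Q_C$ as the sum over $i=1,\dots,k$ (with $z_{k+1}:=z_1$) of the elementary signed one-edge flows described above, and verify $\div Q_C(y)=0$ vertex by vertex by counting how many times $y$ appears as $z_i$ and matching the signs of the incoming and outgoing contributions. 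Once this is done the claim follows, and together with Claim \ref{claim1} and the Fenchel–Rockafellar identity \eqref{FeRo} applied to \eqref{camillo}, it yields $I_1(\mu)=J_1(\mu)$.
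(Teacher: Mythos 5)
Your proposal is correct and follows essentially the same route as the paper: the gradient case via the integration by parts \eqref{bypartscicli}, and the non-gradient case via Lemma \ref{calduccio}, producing two generalized paths with equal endpoints but unequal integrals and pushing an unbounded signed divergence-free flow along them (your loop $\lambda Q_C$ is exactly the paper's $Q_\lambda$, obtained by concatenating $\gamma$ with the reverse of $\gamma'$). Your remark that $Q_C$ should be defined as a sum of elementary one-edge signed flows, so that repeated visits and repeated edges are handled correctly, is a sound and slightly cleaner way to organize the divergence check.
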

\begin{proof} By a simple integration by parts it is trivial to check that it holds $\langle f,Q\rangle=0$ if    $f\in L^\infty(E_\mu)$ is  of gradient type (i.e $f=\nabla g$) and $Q\in L^1(E_\mu)$ is a divergence--free flow (see \eqref{bypartscicli}).
As a consequence  we get
$$
\psi^*(f)=\sup_{Q\in L^1(E_\mu)}\left\{\langle f,Q\rangle-\psi(Q)\right\}=0\,,
$$
for any $f$ of gradient type.

Conversely suppose that $f$ is not of gradient type. Then, by Lemma \ref{calduccio}
there exist two generalized paths $(x_1,x_2, \dots ,x_n)$ and $(y_1,y_2, \dots,y_m)$ in $(V_\mu,E_\mu)$ such that $x_1=y_1$, $x_n=y_m$  and
\begin{equation}\label{nonna}
\sum_{i=1}^{n-1}f_*(x_i,x_{i+1})-\sum_{j=1}^{m-1}f_*(y_j,y_{j+1})>0\,.
\end{equation}
The given $\lambda >0$ we define the divergence--free  $Q_\lambda \in L^1(E_\mu)$ as
$$
Q_\lambda(y,z):=
\begin{cases}
\lambda & \textrm{if} \; (y,z)=(x_i,x_{i+1})\; i=1,\dots,n-1\,,\\
-\lambda & \textrm{if} \; (y,z)=(x_{i+1},x_{i})  \text{ and } (x_i,x_{i+1}) \not \in E_\mu \; i=1,\dots,n-1\\
-\lambda & \textrm{if} \; (y,z)=(y_j,y_{j+1})\; j=1,\dots,m-1\,,\\
\lambda & \textrm{if} \; (y,z)=(y_{j+1},y_j) \text{ and } (y_j,y_{j+1})\not \in E_\mu  \; j=1,\dots,m-1\,,\\
0& \textrm{otherwise}\,.
\end{cases}
$$
Then $\langle f,Q_\lambda
\rangle$ equals $\lambda$ times  the r.h.s. of \eqref{nonna}, thus implying that
 $\lim_{\lambda \to +\infty}\langle f,Q_\lambda
\rangle=+\infty$. In particular, we obtain
$$
\psi^*(f)\geq \lim_{\lambda \to +\infty}\left(\langle
f,Q_\lambda\rangle-\psi(Q_\lambda)\right)=+\infty\,.
$$
This ends the proof of our claim. \end{proof}

\end{document}